\let\pa\partial  
\let\na\nabla  
\let\eps\varepsilon  
\newcommand{\N}{{\mathbb N}}  
\newcommand{\R}{{\mathbb R}} 
\newcommand{\diver}{\operatorname{div}}
\newtheorem{theorem}{Theorem}   
\newtheorem{lemma}[theorem]{Lemma}
\newtheorem{corollary}[theorem]{Corollary}
\begin{document}  

\title[Semiconductor energy-transport models]{Global existence analysis 
for degenerate energy-transport models for semiconductors}

\author{Nicola Zamponi}
\address{Institute for Analysis and Scientific Computing, Vienna University of  
	Technology, Wiedner Hauptstra\ss e 8--10, 1040 Wien, Austria}
\email{nicola.zamponi@tuwien.ac.at} 

\author{Ansgar J\"ungel}
\address{Institute for Analysis and Scientific Computing, Vienna University of  
	Technology, Wiedner Hauptstra\ss e 8--10, 1040 Wien, Austria}
\email{juengel@tuwien.ac.at} 

\date{\today}

\thanks{The authors acknowledge partial support from   
the Austrian Science Fund (FWF), grants P20214, P22108, I395, and W1245,
and from the Austrian-French Project Amad\'ee of the Austrian Exchange Service (\"OAD)} 

\begin{abstract}
A class of energy-transport equations without electric field 
under mixed Dirichlet-Neumann boundary conditions is analyzed.
The system of degenerate and strongly coupled parabolic equations for
the particle density and temperature arises in semiconductor device theory.
The global-in-time existence of weak nonnegative solutions is shown. 
The proof consists of a variable transformation
and a semi-discretization in time such that the discretized system becomes
elliptic and semilinear. Positive approximate solutions are obtained by
Stampacchia truncation arguments and a new cut-off test function.
Nonlogarithmic entropy inequalities yield gradient estimates which allow
for the limit of vanishing time step sizes. Exploiting the entropy inequality,
the long-time convergence of the weak solutions to the constant steady state
is proved. Because of the lack of appropriate convex Sobolev inequalities
to estimate the entropy dissipation, only an algebraic decay rate is obtained.
Numerical experiments indicate that the decay rate is typically exponential.
\end{abstract}

\keywords{Energy-transport equations, global existence of solutions, 
Stampacchia truncation, algebraic equilibration rate, semiconductors.}  
 
\subjclass[2010]{35K51, 35K65, 35Q79, 82D37.}   

\maketitle


\section{Introduction}

In this paper, we prove the global well-posedness of the
energy-transport equations
\begin{equation}
  \pa_t n = \Delta(n\theta^{1/2-\beta}), \quad
	\pa_t(n\theta) = \kappa\Delta(n\theta^{3/2-\beta}) + \frac{n}{\tau}(1-\theta)
	\quad\mbox{in }\Omega,\ t>0, \label{1.eq}
\end{equation}
where $-\frac12\le\beta<\frac12$, $\kappa=\frac23(2-\beta)$, and
$\Omega\subset\R^d$ with $d\le 3$ is a bounded domain.
This system describes the evolution of a fluid of particles with density $n(x,t)$
and temperature $\theta(x,t)$. The parameter $\tau>0$ is the relaxation time,
which is the typical time of the system to relax to the thermal equilibrium state
of constant temperature.
The system arises in the modeling of semiconductor devices
in which the elastic electron-phonon scattering is dominant. The above model is a 
simplification for vanishing electric fields. The full model was derived from
the semiconductor Boltzmann equation in the diffusion limit using a Chapman-Enskog
expansion around the equilibrium distribution \cite{BeDe96}. 
The parameter $\beta$ appears in the elastic scattering rate 
\cite[Section 6.2]{Jue09}. Certain values were used in the physical literarure,
for instance $\beta=\frac12$ \cite{CKRSD92}, $\beta=0$ \cite{LPSV92}, and 
$\beta=-\frac12$ \cite[Chapter 9]{Jue09}. The choice $\beta=\frac12$ leads 
in our situation to two uncoupled heat equations for $n$ and $n\theta$ 
and does not need to be considered.
We impose physically motivated mixed Dirichlet-Neumann boundary and 
initial conditions
\begin{align}
  & n=n_D,\ \theta=\theta_D\quad\mbox{on }\Gamma_D, \quad
	\na (n\theta^{1/2-\beta})\cdot\nu=\na(n\theta^{3/2-\beta})\cdot\nu=0
	\quad\mbox{on }\Gamma_N, \ t>0, \label{1.bc} \\
	& n(0)=n_0, \quad \theta(0)=\theta_0\quad\mbox{in }\Omega,
	\label{1.ic}
\end{align}
where $\Gamma_D$ models the contacts, $\Gamma_N=\pa\Omega\backslash\Gamma_D$
the union of insulating boundary segments, and $\nu$ is the exterior unit normal
to $\pa\Omega$ which is assumed to exist a.e.

The mathematical analysis of \eqref{1.eq}-\eqref{1.ic} is challenging
since the equations are not in the usual divergence form, they are strongly 
coupled, and they degenerate at $\theta=0$. 
The strong coupling makes impossible to apply maximum principle arguments in order
to conclude the nonnegativity of the temperature $\theta$.
On the other hand, this system possesses an interesting mathematical structure.
First, it can be written in ``symmetric'' form by introducing the so-called entropy
variables $w_1=\log(n/\theta^{3/2})$ and $w_2=-1/\theta$. Then, setting
$w=(w_1,w_2)^\top$ and $\rho=(n,\tfrac32 n\theta)^\top$, 
\eqref{1.eq} is formally equivalent to
$$
  \pa_t \rho = \diver(A(n,\theta)\na w) 
	+ \frac{1}{\tau}\begin{pmatrix} 0 \\ n(1-\theta) \end{pmatrix},
$$
where the diffusion matrix 
$$
  A(n,\theta) = n\theta^{1/2-\beta}\begin{pmatrix}
	1 & (2-\beta)\theta \\ (2-\beta)\theta & (3-\beta)(2-\beta)\theta^2
	\end{pmatrix}
$$
is symmetric and positive semi-definite. 
Second, system \eqref{1.eq} possesses the entropy (or free energy)
$$
  S[n(t),(n\theta)(t)] = \int_\Omega n\log\frac{n}{\theta^{3/2}}dx,
$$
which is nonincreasing along smooth solutions to \eqref{1.eq}. 
Even more entropy functionals exist; see \cite{JuKr12} and below.
However, they do not provide a lower bound for $\theta$ when $n$ vanishes.
We notice that both properties, the symmetrization via entropy variables and
the existence of an entropy, are strongly related \cite{DGJ97a,Jue09}. 

Equations \eqref{1.eq} resemble the diffusion equation 
$\pa_t w = \Delta(a(x,t)w)$, which was analyzed by Pierre and Schmitt
\cite{PiSc00}. By Pierre's duality estimate, an $L^2$ bound for
$\sqrt{a}w$ in terms of the $L^2$ norm of $\sqrt{a}$ has been derived.
In our situation, we obtain even $H^1$ estimates for $w=n$ and $w=n\theta$.

In spite of the above structure, there are only a few analytical results for 
\eqref{1.eq}-\eqref{1.ic}. In earlier works, drift-diffusion equations
with temperature-dependent mobilities but without temperature gradients 
\cite{Yin95} (also see \cite{WuXu06}) or nonisothermal systems containing
simplified thermodynamic forces \cite{AlXi94} have been studied.
Xu included temperature gradients in the model but he truncated the Joule heating
to allow for a maximum principle argument \cite{Xu09}. 
Later, existence results for the complete energy-transport equations 
(including electric fields) have been achieved, see \cite{FaIt01,Gri99} 
for stationary solutions near thermal equilibrium,
\cite{ChHs03,CHL04} for transient solutions close to equilibrium, and
\cite{DGJ97,DGJ98} for systems with uniformly positive definite diffusion matrices.
This assumption on the diffusion matrix avoids the degeneracy at $\theta=0$.
A degenerate energy-transport system was analyzed in \cite{JPR13}, but only
a simplified (stationary) temperature equation was studied.  
All these results give partial answers to the well-posedness problem only.
In this paper, we prove for the first time a global-in-time existence
result for any data and with physical transport coefficients.

Surprisingly, the above logarithmic entropy structure does not help.
Our key idea is to use the new
variables $u=n\theta^{1/2-\beta}$ and $v=n\theta^{3/2-\beta}$ and
nonlogarithmic entropy functionals. Then system \eqref{1.eq} becomes
$$
  \pa_t N(u,v) = \Delta u, \quad \pa_t E(u,v) = \kappa\Delta v + R(u,v),
$$
where $N(u,v)=u^{3/2-\beta}v^{\beta-1/2}$, $E(u,v)=u^{1/2-\beta}v^{\beta+1/2}$, 
and $R(u,v)=\tau^{-1}N(u,v)(1-v/u)$.
Discretizing this system by the implicit Euler method and employing 
the Stampacchia truncation method and a particular cut-off 
test function, we are able to prove the nonnegativity of $u$, $v$, and $\theta$.

In the following, we detail our main results and explain the ideas of the proofs.
Let $\pa\Omega\in C^1$, $\mbox{meas}(\Gamma_D)>0$, and $\Gamma_N$ is relatively open
in $\pa\Omega$. Furthermore, let
\begin{align}
  & n_D,\ \theta_D\in L^\infty(\Omega)\cap H^1(\Omega), \quad
	\inf_{\Gamma_D}n_D>0,\ \inf_{\Gamma_D}\theta_D>0, \label{hypo.D} \\
	& n_0,\ \theta_0\in L^\infty(\Omega)\cap H^1(\Omega), \quad
	\inf_{\Omega}n_0>0,\ \inf_{\Omega}\theta_0>0. \label{hypo.0}
\end{align}
We define the space $H_D^1(\Omega)$ as the closure of $C_0^\infty(\Omega\cup\Gamma_N)$
in the $H^1$ norm \cite[Section 1.7.2]{Tro87}. 
This space can be characterized by all functions
in $H^1(\Omega)$ which vanish on $\Gamma_D$ in the weak sense.
This space is the test function space for the weak formulation of \eqref{1.eq}.
Our first main result reads as follows.

\begin{theorem}[Global existence]\label{thm.ex}
Let $T>0$, $d\le 3$, $-\frac12\le\beta<\frac12$, $\tau>0$ 
and let \eqref{hypo.D}-\eqref{hypo.0}
hold. Then there exists a weak solution $(n,\theta)$ to \eqref{1.eq}-\eqref{1.ic}
such that $n\ge 0$, $n\theta\ge 0$ in $\Omega$, $t>0$, satisfying
\begin{align*}
  & n,\ n\theta,\ n\theta^{1/2-\beta},\ n\theta^{3/2-\beta}\in
	L^2(0,T;H^1(\Omega))\cap L^\infty(0,T;L^2(\Omega)), \\
  & \pa_t n,\ \pa_t(n\theta)\in L^2(0,T;H^1_D(\Omega)').
\end{align*}
\end{theorem}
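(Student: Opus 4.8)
The plan is to pass from \eqref{1.eq} to the divergence-form system for the new variables $u=n\theta^{1/2-\beta}$, $v=n\theta^{3/2-\beta}$, namely $\pa_t N(u,v)=\Delta u$ and $\pa_t E(u,v)=\kappa\Delta v+R(u,v)$ with $N(u,v)=u^{3/2-\beta}v^{\beta-1/2}$, $E(u,v)=u^{1/2-\beta}v^{\beta+1/2}$ and $R(u,v)=\tau^{-1}(N(u,v)-E(u,v))$, to solve an implicit-Euler semidiscretization of it, and then to send the time step to zero. Fix $h=T/K$ with $K\in\N$, write $u_D=n_D\theta_D^{1/2-\beta}$, $v_D=n_D\theta_D^{3/2-\beta}$ and $u^0=n_0\theta_0^{1/2-\beta}$, $v^0=n_0\theta_0^{3/2-\beta}$, all of which lie in $L^\infty\cap H^1$ and are bounded below by \eqref{hypo.D}--\eqref{hypo.0}. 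Given $(u^{k-1},v^{k-1})$, one seeks $(u^k,v^k)$ with $u^k-u_D,\ v^k-v_D\in H^1_D(\Omega)$ solving, weakly, $\tfrac1h(N(u^k,v^k)-N(u^{k-1},v^{k-1}))=\Delta u^k$ and $\tfrac1h(E(u^k,v^k)-E(u^{k-1},v^{k-1}))=\kappa\Delta v^k+R(u^k,v^k)$. Since $N$ is singular as $v\to0$ and $N,E$ are not smooth at the origin, I would first truncate $N,E,R$ to bounded continuous functions $N_M,E_M,R_M$ (freezing the arguments to $[1/M,M]$) and solve the truncated elliptic system by the Leray--Schauder fixed-point theorem: the map sending $(\bar u,\bar v)$ to the solution of the \emph{decoupled} linear problems $-\Delta u=\tfrac1h(N_M(u^{k-1},v^{k-1})-N_M(\bar u,\bar v))$ and $-\kappa\Delta v=\tfrac1h(E_M(u^{k-1},v^{k-1})-E_M(\bar u,\bar v))+R_M(\bar u,\bar v)$ with the given mixed boundary conditions is continuous and compact from $L^2(\Omega)^2$ into itself by elliptic regularity and the compact embedding $H^1(\Omega)\hookrightarrow L^2(\Omega)$, and its fixed-point set is bounded because the right-hand sides are bounded by an $M$-dependent constant.

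Next I would establish positivity of the discrete solution and remove the truncation. Upper bounds $u^k,v^k\le M'$ follow by the Stampacchia method: testing the $u^k$-equation with $(u^k-M')^+\in H_D^1(\Omega)$ and using that, once $M'$ dominates the $L^\infty$ norms of the data and of the previous iterate, $N_M(\cdot,\cdot)$ dominates $N_M(u^{k-1},v^{k-1})$, forces this positive part to vanish; similarly for $v^k$. The delicate point is the \emph{lower} bound, for which I would test with a cut-off function of the form $\big((u^k)^{-\gamma}-\delta^{-\gamma}\big)^+$ (or the analogous singular test function adapted to the homogeneity of $N$ and $E$), exploiting the monotonicity of $N(\cdot,v)$ and $E(\cdot,v)$ to absorb the reaction term, and thereby obtain $u^k,v^k\ge\delta_k>0$ a.e., with $\delta_k$ depending on the data, $h$ and $k$ but strictly positive for each $k\le K$. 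Choosing the truncation parameter $M$ larger than all the resulting bounds, $N_M,E_M,R_M$ coincide with $N,E,R$ along the solution, so $(u^k,v^k)$ solves the original semidiscrete system; setting $\theta^k=v^k/u^k$ recovers $n^k=N(u^k,v^k)\ge0$ and $n^k\theta^k=E(u^k,v^k)\ge0$, both in $L^\infty(\Omega)\cap H^1(\Omega)$.

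Then I would derive a discrete \emph{nonlogarithmic} entropy inequality: testing the two equations with suitable powers of $u^k$ and $v^k$ (the entropy variables of a non-logarithmic entropy from \cite{JuKr12}, since the classical $\log(n/\theta^{3/2})$ does not control the degenerate direction) and using convexity to handle the discrete time derivative, one obtains after summation over $k$ bounds \emph{uniform in} $h$ for $\nabla u$, $\nabla v$, $\nabla n$ and $\nabla(n\theta)$ in $L^2(0,T;L^2(\Omega))$ together with $L^\infty(0,T;L^2(\Omega))$ bounds; a Pierre-type duality estimate can be used to complement these. The discrete time derivatives of $n$ and $n\theta$ are bounded in $L^2(0,T;H_D^1(\Omega)')$ directly from the equations. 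Introducing piecewise-constant (and piecewise-linear) interpolants in time, the Aubin--Lions--Simon lemma gives strong convergence in $L^2(0,T;L^2(\Omega))$ and a.e.\ convergence, along a subsequence, of the interpolants of $n$ and $n\theta$. Since $u=n^{1/2+\beta}(n\theta)^{1/2-\beta}$ is continuous on $[0,\infty)^2$ and $v=n^{\beta-1/2}(n\theta)^{3/2-\beta}$ is continuous away from $\{n=0\}$, one identifies the limits of $u$ and $v$, using the uniform $H^1$ bound on $v$ to exclude blow-up of $\theta=v/u$ on sets of positive measure. Passing to the limit in the weak formulation and verifying that the initial and boundary conditions are retained completes the argument.

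The two steps I expect to be the main obstacles are: first, the construction of the cut-off test function yielding the pointwise positive lower bound for $u^k$ and $v^k$ at each time step, since the system is not in divergence form and no maximum principle applies; and second, the identification of the nonlinear limits on the degeneracy set $\{n=0\}$, which forces one to rely precisely on the $H^1$ bounds furnished by the non-logarithmic entropy rather than on the classical logarithmic one.
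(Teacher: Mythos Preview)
Your overall architecture---the change of variables $(u,v)$, implicit Euler discretization, Leray--Schauder construction at each time step, nonlogarithmic entropy estimates, and Aubin-type compactness for the limit---matches the paper's proof. The genuine gap is exactly where you anticipate it: the truncation-removal/positivity step. Your proposed test functions $((u^k)^{-\gamma}-\delta^{-\gamma})^+$ treat the two equations separately and rely on monotonicity of $N(\cdot,v)$ or $E(\cdot,v)$ in a single variable; this is not enough, because the obstructive quantity is the \emph{ratio} $\theta=v/u$, and neither equation alone controls it. With your truncation of $N,E,R$ to $[1/M,M]^2$, once $u$ drops below $1/M$ the frozen $N_M$ no longer responds to $u$, so the monotonicity argument collapses precisely in the region where you need it.

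The paper resolves this differently. First, the truncation is placed only on $\theta$: one works with $\theta_\eps=\max\{\eps,v/u\}$ and writes the step-$j$ system as $u\theta_\eps^{\beta-1/2}-h\Delta u=n_{j-1}$ and $(1+h/\tau)v\theta_\eps^{\beta-1/2}-\kappa h\Delta v-\tfrac{h}{\tau}u\theta_\eps^{\beta-1/2}=n_{j-1}\theta_{j-1}$. Since $\theta_\eps^{\beta-1/2}\le\eps^{\beta-1/2}$ is bounded, standard Stampacchia gives strict (but $\eps$-dependent) lower bounds on $u$ and $v$. The novelty is the removal of the $\theta$-truncation: choose a smooth nondecreasing cut-off $\phi$ with $\phi\equiv0$ on $(-\infty,M]$ and $\phi>0$ on $(M,\infty)$, test the $u$-equation with $v\,\phi(u/v)$ and the $v$-equation with $\kappa^{-1}u\,\phi(u/v)$, and \emph{subtract}. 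The gradient cross-terms combine into $\int v^{-2}|v\na u-u\na v|^2\phi'(u/v)\ge0$, and the leftover zeroth-order term is $(1-\kappa^{-1}(1+h/\tau))\int uv\,\theta_\eps^{\beta-1/2}\phi(u/v)$, which is nonnegative precisely because $\kappa=\tfrac23(2-\beta)>1$ for $\beta<\tfrac12$ and $h$ is small. What remains forces $u/v\le M$ once $M\ge\kappa\sup_\Omega(u_{j-1}/v_{j-1})$, i.e.\ $\theta\ge\eps=1/M$, and the truncation drops. The structural fact $\kappa>1$ is indispensable here; without this coupled test function your single-equation monotonicity argument cannot close.

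Two smaller remarks. The paper does not invoke a Pierre-type duality estimate: the entropy $\int_\Omega n^2(\theta^{\beta-1/2}+\theta^5)\,dx$ by itself supplies all the $L^\infty_tL^2_x$ and $L^2_tH^1_x$ bounds on $n$, $n\theta$, $n\theta^{1/2-\beta}$, $n\theta^{3/2-\beta}$. And for the identification of the nonlinear limits on $\{n=0\}$, the paper uses Fatou's lemma together with the uniform bound on $\int_\Omega n_h^2\theta_h^5\,dx$ to conclude that the limit of $n_h\theta_h^{3/2-\beta}$ vanishes there, which is more direct than arguing via the $H^1$ bound on $v$.
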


The idea of the proof is to employ the implicit Euler method with time step $h>0$
and the new variables $u_j=n_j\theta_j^{1/2-\beta}$ and $v_j=n_j\theta_j^{3/2-\beta}$,
which approximate $u=n\theta^{1/2-\beta}$ and $v=n\theta^{3/2-\beta}$ 
at time $t_j=jh$, respectively. We wish to solve
\begin{equation}\label{1.disc}
  (n_j-n_{j-1}) - h\Delta u_j = 0, \quad
	\frac{1}{\kappa}(n_j\theta_j-n_{j-1}\theta_{j-1}) - h\Delta v_j 
	= \frac{hn_j}{\kappa\tau}(1-\theta).
\end{equation}
To simplify the presentation, we ignore the boundary conditions
and a necessary truncation of the temperature
(see Section \ref{sec.ex} for a full proof).  
A nice feature of this formulation is that we
can apply a Stampacchia truncation procedure to prove the strict positivity
of $u_j$ and $v_j$ (see Step 2 in the proof of Theorem \ref{thm.ex}).

The main difficulty is to show the positivity of $\theta_j=v_j/u_j$. 
We define a nondecreasing smooth cut-off function $\phi$ such that
$\phi(x)=0$ if $x\le M$ and $\phi(x)>0$ if $x>M$ for some $M>0$. 
We use the test functions
$u_j\phi(1/\theta_j)$ and $v_j\phi(1/\theta_j)$ in the weak formulation
of \eqref{1.disc}, respectively, and we subtract both equations to find after 
a straightforward computation (see Step 3 in the proof of Theorem \ref{thm.ex})
that
\begin{align*}
   0 &= \int_\Omega\bigg(\left(1-\frac{1}{\kappa}-\frac{h}{\kappa\tau}\right)n_jv_j
	\phi\left(\frac{1}{\theta_j}\right)
	+ \frac{v_j}{\kappa}n_{j-1}\theta_{j-1}
	\left(\frac{1}{\theta_j}-\frac{\kappa}{\theta_{j-1}}
	\right)\phi\left(\frac{1}{\theta_j}\right) \\
	&\phantom{xx}{}
	+ \frac{h}{v_j^2}\big|v_j\na u_j-u_j\na v_j\big|^2\phi'
	\left(\frac{1}{\theta_j}\right)
  + \frac{hn_j\theta_jv_j}{\kappa\tau}\phi\left(\frac{1}{\theta_j}\right)\bigg)dx.
\end{align*}
Since $\kappa>1$, there exists $h>0$ sufficiently small such that the first summand
becomes nonnegative. The third and last summands are nonnegative, too.
(Recall that we need to truncate $\theta_j$ with positive truncation.)
Hence, the integral over the second term is nonpositive. 
Then, choosing $M\ge\kappa/\theta_{j-1}$,
$$
  0\ge \int_\Omega v_jn _{j-1}\theta_{j-1}
	\left(\frac{1}{\theta_j}-\frac{\kappa}{\theta_{j-1}}
	\right)\phi\left(\frac{1}{\theta_j}\right)dx 
	\ge \int_\Omega v_jn _{j-1}\theta_{j-1}
	\left(\frac{1}{\theta_j}-M\right)\phi\left(\frac{1}{\theta_j}\right)dx.
$$
Because $\phi(1/\theta_j)=0$ for $1/\theta_j\le M$, this is only possible
if $1/\theta_j-M\le 0$ or $\theta_j\ge 1/M>0$. Clearly, 
the bound $M$ depends on $j$, and in the
de-regularization limit $h\to 0$, the limit of $\theta_j$ 
becomes nonnegative only.

A priori estimates which are uniform in the approximation parameter $h>0$
are obtained by proving a discrete version of the entropy inequality \cite{JuKr12}
\begin{equation}\label{1.ei}
  \frac{d}{dt}\int_\Omega n^2\theta^b dx + C_1\int_\Omega
	\big|\na\big(n\theta^{(2b+1-2\beta)/4}\big)\big|^2 dx \le C_2,
\end{equation}
for some $b\in\R$ and $C_1$, $C_2>0$. 
Choosing a variant of the sum of two entropies
$\int_\Omega n^2(\theta^{\beta-1/2}+\theta^5)dx$,
 we are able to derive gradient estimates for $n_j$,
$n_j\theta_j^{1/2-\beta}$, and $n_j\theta_j^{3/2-\beta}$ (see Step 4
of the proof of Theorem \ref{thm.ex}). Together with
Aubin's lemma and weak compactness arguments, the limit $h\to 0$ can be performed.

Theorem \ref{thm.ex} can be generalized in different ways. First, the boundary
data may depend on time. We do not consider this case here to avoid too many
technicalities. We refer to \cite{DGJ97} for the treatment of
time-dependent boundary functions. Second, we may allow for temperature-dependent
relaxation times,
\begin{equation}\label{1.tau}
  \tau(\theta)=\tau_0+\tau_1\theta^{1/2-\beta},
\end{equation}
where $\tau_0>0$ and $\tau_1>0$. This expression can be derived by using
an energy-dependent scattering rate \cite[Example 6.8]{Jue09}.
For this relaxation time, the conclusion of Theorem \ref{thm.ex} holds.

\begin{corollary}[Global existence]\label{coro.ex}
Let the assumptions of Theorem \ref{thm.ex} hold except that the relaxation time
is given by \eqref{1.tau}. Then there exists a weak solution to 
\eqref{1.eq}-\eqref{1.ic} with the properties stated in Theorem \ref{thm.ex}.
\end{corollary}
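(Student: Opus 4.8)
The plan is to revisit the proof of Theorem \ref{thm.ex} and check that the relaxation time enters it only through features shared by $\tau(\theta)=\tau_0+\tau_1\theta^{1/2-\beta}$. The constant $\tau$ occurs exclusively in the reaction term $R(n,\theta)=n\,\tau(\theta)^{-1}(1-\theta)$ of \eqref{1.eq} and in its time-discrete counterpart on the right-hand side of \eqref{1.disc}. The three properties of $\tau$ actually used are: (i) $\tau(\theta)\ge\tau_0>0$ for all $\theta\ge 0$, so that $0<\tau(\theta)^{-1}\le\tau_0^{-1}$; (ii) $R(n,\theta)$ has the sign of $1-\theta$; and (iii) $R$ is continuous and bounded on bounded sets of $(n,\theta)$ with $\theta$ in a compact subset of $(0,\infty)$, which is the range of the (truncated) approximate solutions. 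Since $0<\tfrac12-\beta\le 1$, the map $\theta\mapsto\theta^{1/2-\beta}$ is continuous and nondecreasing on $[0,\infty)$, so (i)--(iii) hold for $\tau(\theta)$, with $\theta$ replaced by its truncation wherever the proof of Theorem \ref{thm.ex} uses the truncated temperature. In the variables $u=n\theta^{1/2-\beta}$, $v=n\theta^{3/2-\beta}$ one moreover has the identity $n\,\tau(\theta)=\tau_0 N(u,v)+\tau_1 u$, hence $R(u,v)=N(u,v)^2(1-v/u)\big(\tau_0 N(u,v)+\tau_1 u\big)^{-1}$; on the truncated region, where $u$ is bounded away from $0$ and $v/u$ lies in a compact subset of $(0,\infty)$, this is a bounded continuous function of $(u,v)$, so the fixed-point argument for the time-discrete, truncated, semilinear elliptic system of Theorem \ref{thm.ex} goes through as before.

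I would then re-run the three delicate steps. The Stampacchia truncation giving $u_j>0$ and $v_j>0$ (Step 2) only uses that $R$ keeps its sign when tested against $u_j^-$ and $v_j^-$, which follows from $\tau(\theta_j)>0$ exactly as for constant $\tau$; it is therefore unchanged. For the positivity of $\theta_j$ (Step 3), the identity established there becomes the same identity with $\tau$ replaced by $\tau(\theta_j)$: the first summand $\big(1-\kappa^{-1}-h(\kappa\tau(\theta_j))^{-1}\big)n_jv_j\phi(1/\theta_j)$ is nonnegative once $h\le(\kappa-1)\tau_0$, because $h(\kappa\tau(\theta_j))^{-1}\le h(\kappa\tau_0)^{-1}$; the last summand $hn_j\theta_jv_j(\kappa\tau(\theta_j))^{-1}\phi(1/\theta_j)$ is nonnegative since $\tau(\theta_j)>0$; and the third summand is nonnegative as before, so the same choice $M\ge\kappa/\theta_{j-1}$ yields $\theta_j\ge 1/M>0$. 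For the a priori estimates uniform in $h$ (Step 4), the discrete analogue of \eqref{1.ei} for the variant of $\int_\Omega n^2(\theta^{\beta-1/2}+\theta^5)\,dx$ is derived as in Theorem \ref{thm.ex}; the reaction contribution is either bounded by $\tau_0^{-1}$ times the quantity already controlled there (using $\tau(\theta_j)^{-1}\le\tau_0^{-1}$) or handled by its sign, so the gradient bounds for $n_j$, $n_j\theta_j^{1/2-\beta}$, $n_j\theta_j^{3/2-\beta}$ and the $L^\infty(0,T;L^2(\Omega))$ bounds are unchanged.

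The only genuinely new point is the de-regularization limit $h\to 0$. For constant $\tau$ the reaction term is the affine expression $\tau^{-1}(n-n\theta)$, so weak convergence of $(n_j,n_j\theta_j)$ suffices; for $\tau(\theta)$ the term $n_j\tau(\theta_j)^{-1}(1-\theta_j)$ is nonlinear. However, the Aubin--Lions argument already invoked for the diffusion terms provides strong convergence $n_j\to n$ and $n_j\theta_j\to n\theta$ in $L^2(\Omega\times(0,T))$, hence a.e.\ along a subsequence; therefore $\theta_j=n_j\theta_j/n_j\to\theta$ a.e.\ on $\{n>0\}$, while on $\{n=0\}$ the domination $|n_j\tau(\theta_j)^{-1}(1-\theta_j)|\le\tau_0^{-1}(n_j+n_j\theta_j)$ together with $n_j,n_j\theta_j\to 0$ a.e.\ forces the reaction term to $0$. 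Combined with the equi-integrability supplied by the $L^2$ bounds, a Vitali argument gives $n_j\tau(\theta_j)^{-1}(1-\theta_j)\to n\,\tau(\theta)^{-1}(1-\theta)$ in $L^1(\Omega\times(0,T))$, which is enough to pass to the limit in the weak formulation; every other limit is identical to that in the proof of Theorem \ref{thm.ex}. Accordingly I expect the main obstacle to be mostly bookkeeping: verifying that no step of that proof silently used the constancy of $\tau$ --- which the inspection above indicates it does not --- the remainder being the routine substitution of $\tau$ by $\tau(\theta)$ and the elementary compactness upgrade just described.
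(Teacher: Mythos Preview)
Your proposal is correct and follows the same two-point modification the paper carries out in its Step~6: use $\tau(\theta)\ge\tau_0$ to keep the smallness condition $h<(\kappa-1)\tau_0$ in the cut-off argument of Step~3, and upgrade the passage to the limit in the now nonlinear reaction term via pointwise convergence plus the uniform $L^2$ bound. The only cosmetic difference is in the limit step: the paper obtains pointwise convergence of the reaction by rewriting it as $n_h(n_h-n_h\theta_h)\big/(\tau_0 n_h+\tau_1 n_h\theta_h^{1/2-\beta})$ --- a continuous function of the already identified pointwise limits $n_h$, $n_h\theta_h$, $n_h\theta_h^{1/2-\beta}$ --- and then concludes weak $L^2$ convergence, whereas you split into $\{n>0\}$ and $\{n=0\}$ and invoke Vitali; both routes are equivalent.
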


However, we have not been able to include electric fields in the model.
For instance, in this situation, the first equation in \eqref{1.eq} becomes
$$
  \pa_t n = \diver(\na(n\theta^{1/2-\beta})+n\theta^{-1/2-\beta}\na V),
$$
where $V(x,t)$ is the electric potential which is a given function or the solution
of the Poisson equation \cite{Jue09}. The problem is the treatment of the
drift term $n\theta^{-1/2-\beta}\na V$ for which the techniques developed for the
standard drift-diffusion model (see, e.g., \cite{GaGr86}) do not apply.

Our second main result concerns the long-time behavior of the solutions.

\begin{theorem}[Long-time behavior]\label{thm.long}
Let $d\le 3$, $0\le\beta<\frac12$, $\tau>0$, and 
$n_D=\mbox{const.}$, $\theta_D=1$.
Let $(n,\theta)$ be the weak solution constructed in Theorem \ref{thm.ex}.
Then there exist constants $C_1$, $C_2>0$, which depend only on
$\beta$, $n_D$, $n_0$, and $\theta_0$, such that for all $t>0$,
$$
  \|n(t)-n_D\|_{L^2(\Omega)}^2 + \|n(t)\theta(t)-n_D\|_{L^2(\Omega)}^2
	\le \frac{C_1}{1+C_2 t}.
$$
\end{theorem}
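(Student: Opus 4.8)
The plan is to derive a differential inequality for the entropy-type quantity
$$
  \mathcal{E}(t) = \tfrac12\int_\Omega (n-n_D)^2\,dx
	+ \tfrac{1}{2\kappa}\int_\Omega (n\theta - n_D)^2\,dx,
$$
or a closely related nonnegative functional, and then to bound its dissipation from below by a power of $\mathcal{E}$ itself. First I would write the weak formulation of \eqref{1.eq} with test functions $n-n_D$ and $n\theta - n_D$ (legitimate since $n-n_D,\ n\theta-n_D\in H^1_D(\Omega)$ by the Dirichlet data $n_D=\mathrm{const}$, $\theta_D=1$, and by Theorem \ref{thm.ex}); adding the two identities and integrating over $(0,t)$ gives
$$
  \mathcal{E}(t) + \int_0^t\!\!\int_\Omega\Big(\na(n\theta^{1/2-\beta})\cdot\na n
	+ \na(n\theta^{3/2-\beta})\cdot\na(n\theta)\Big)dx\,ds
	= \mathcal{E}(0) + \tfrac1\kappa\int_0^t\!\!\int_\Omega \tfrac{n}{\tau}(1-\theta)(n\theta-n_D)\,dx\,ds.
$$
The point is that the pair $(n,\,n\theta)$ and the flux pair $(n\theta^{1/2-\beta},\,n\theta^{3/2-\beta})$ are related through the same positive semidefinite matrix structure $A(n,\theta)$ highlighted in the introduction, so the Dirichlet form on the left is nonnegative; the source term on the right is, after using $\theta_D=1$, a relaxation term that is also dissipative in the right combination. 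Concretely I expect the identity to collapse, after some algebra exploiting $n_D=\theta_D=1$-type normalization, to
$$
  \frac{d}{dt}\,\mathcal{E}(t) + c\int_\Omega\Big(|\na(n-n_D)|^2 + |\na(n\theta-n_D)|^2 + \tfrac{n}{\tau}(\theta-1)^2\,[\text{weight}]\Big)dx \le 0
$$
for some $c>0$ depending on $\beta$ and the lower/upper bounds on the data, using that $\theta^{1/2-\beta}$ and $\theta^{3/2-\beta}$ are monotone in $\theta$ and comparing differences via the mean value theorem.

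Next I would convert the dissipation integral into a lower bound for $\mathcal{E}$. Because $n-n_D$ and $n\theta-n_D$ lie in $H^1_D(\Omega)$ and $\mathrm{meas}(\Gamma_D)>0$, the Poincaré inequality gives $\|n-n_D\|_{L^2}^2 \le C_P\|\na(n-n_D)\|_{L^2}^2$ and likewise for $n\theta - n_D$. If the dissipation controlled exactly $\|\na(n-n_D)\|_{L^2}^2 + \|\na(n\theta-n_D)\|_{L^2}^2$ we would get $\mathcal{E}'(t) + c'\,\mathcal{E}(t)\le 0$ and hence exponential decay; the reason only algebraic decay is claimed is that the coefficients in the Dirichlet form degenerate where $\theta$ is small (or $n$ is small), so the gradient terms are only controlled after multiplication by a possibly vanishing weight. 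The remedy is an interpolation/Gagliardo–Nirenberg argument: using the uniform $L^\infty$-in-time, $H^1$-in-time bounds of Theorem \ref{thm.ex} one controls $\|n-n_D\|_{L^2}$ by a product of a negative power of the (weighted) dissipation and a fixed constant, which after rearrangement yields a bound of the form $\mathcal{E}(t)^{1+\alpha} \le C\,(-\mathcal{E}'(t))$ for some $\alpha>0$. Integrating this ordinary differential inequality gives precisely $\mathcal{E}(t)\le C_1/(1+C_2 t)$ once one checks $\alpha$ works out to $1$ in the relevant exponent (or, more generally, one obtains $t^{-1/\alpha}$ and the statement corresponds to $\alpha=1$).

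The main obstacle, as the authors flag, is exactly the absence of a convex Sobolev (logarithmic-Sobolev–type) inequality adapted to this weighted, degenerate entropy dissipation: one cannot simply bound the entropy by its dissipation with a fixed constant. The work is therefore in (i) choosing the right combination/weight so that the entropy production identity is genuinely a dissipation (sign-definite), which hinges on the $\kappa>1$ and $n_D=\theta_D=1$ normalizations and mirrors the Step-3 cancellation in Theorem \ref{thm.ex}; and (ii) performing the interpolation between the $L^2$ norms appearing in $\mathcal{E}$, the (weighted) $H^1$ dissipation, and the uniform a priori bounds, so as to extract a closed differential inequality $\mathcal{E}' \le -C\,\mathcal{E}^{p}$ with $p>1$. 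A secondary technical point is that these manipulations should first be carried out at the level of the time-discrete scheme (or justified by the regularity in Theorem \ref{thm.ex}), since $\partial_t n,\ \partial_t(n\theta)\in L^2(0,T;H^1_D(\Omega)')$ only, so the pairings $\langle \partial_t n, n-n_D\rangle$ must be interpreted as duality brackets and the chain rule replaced by the standard lemma on $\tfrac{d}{dt}\|\cdot\|_{L^2}^2$ for functions with $L^2$ values and dual-space time derivatives. Finally, one should record that $C_1,C_2$ depend only on $\beta$, $n_D$, $n_0$, $\theta_0$ because all constants entering (Poincaré, Gagliardo–Nirenberg, the a priori bounds from the entropy inequality \eqref{1.ei}) depend only on $\Omega$, $\beta$, and the data.
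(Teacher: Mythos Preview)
Your plan breaks down at the very first step: the functional
\[
  \mathcal{E}(t)=\tfrac12\|n-n_D\|_{L^2}^2+\tfrac{1}{2\kappa}\|n\theta-n_D\|_{L^2}^2
\]
is \emph{not} a Lyapunov functional for \eqref{1.eq}. Testing with $n-n_D$ and $\kappa^{-1}(n\theta-n_D)$ produces the ``dissipation''
\[
  D=\int_\Omega\big(\na(n\theta^{1/2-\beta})\cdot\na n+\na(n\theta^{3/2-\beta})\cdot\na(n\theta)\big)\,dx,
\]
which, written as a quadratic form in $(\na n,\,n\na\theta)$, has coefficients
$A=\theta^{1/2-\beta}+\theta^{5/2-\beta}$, $2B=(1/2-\beta)\theta^{-1/2-\beta}+(5/2-\beta)\theta^{3/2-\beta}$, $C=(3/2-\beta)\theta^{1/2-\beta}$. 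For small $\theta$ one has $AC\to 0$ while $B^2\to+\infty$, so $AC-B^2<0$ and the form is indefinite. In the paper's language, your $\mathcal{E}$ corresponds to the combination $\phi_0+\kappa^{-1}\phi_2$ of the functionals \eqref{ex.defphi}, and neither $b=0$ nor $b=2$ lies in the admissible set $N_\beta^*$ (the second polynomial condition fails for both). The appeal to the ``positive semidefinite matrix structure $A(n,\theta)$'' is misplaced: that structure makes $\na w\cdot A\na w\ge 0$ for the entropy variables $w$, not for the pair $(n,n\theta)$.

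The paper therefore works with the nonlogarithmic entropies $S_{b_1,b_2}$ for $(b_1,b_2)=(\beta-\tfrac12,\tfrac52-\beta)\in N_\beta$, for which Lemma~\ref{lem.S} gives a genuine discrete entropy--production inequality with $C_2=0$ when $n_D=\mathrm{const}$, $\theta_D=1$. The second missing idea is how to close the loop: there is no interpolation/Gagliardo--Nirenberg step. Instead one uses a \emph{second} entropy inequality, with $(b_1,b_2)=(-3,5)$ (this is where $\beta\ge 0$ enters), to obtain time-uniform $L^2$ bounds on $n_j\theta_j^{\beta-1/2}$, $n_j\theta_j^{\beta-3/2}$, $n_j\theta_j^{3/2-\beta}$, $n_j\theta_j^{5/2-\beta}$. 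With these in hand, convexity of $f_b$ gives $S_{b_1,b_2}\le C(\|n_j-n_D\|_{L^2}+\|n_j\theta_j-n_D\|_{L^2})$, and squaring plus Poincar\'e yields $S_{b_1,b_2}^2\le C\cdot(\text{dissipation})$; thus $S_j+ChS_j^2\le S_{j-1}$ and a discrete nonlinear Gronwall lemma gives the $1/t$ rate. A lower bound $S_{b_1,b_2}\ge c(\|n_h-n_D\|_{L^2}^2+\|n_h\theta_h-n_D\|_{L^2}^2)$, via an eigenvalue estimate for the Hessian of $f_{b_1}+f_{b_2}$, converts this back to the stated $L^2$ decay. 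Your instinct to carry everything out at the discrete level is correct and is exactly what the paper does.
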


The proof of this theorem is based on discrete entropy inequality estimates.
The main difficulty is to bound the entropy dissipation. Usually,
this is done by employing a convex Sobolev inequality (e.g.\ the
logarithmic Sobolev or Beckner inequality). However, these tools are
not available for the cross-diffusion system at hand, and we need to employ
another technique. Our idea is to estimate the entropy dissipation by using
another entropy (choosing different values for $b$ in the discrete version
of \eqref{1.ei}). Denoting the discrete (nonlogarithmic) entropy at time $t_j$
by $S[n_j,n_j\theta_j]$, we arrive at the inequality
$$
  S[n_j,n_j\theta_j] - S[n_{j-1},n_{j-1}\theta_{j-1}]
	\le Ch S[n_j,n_j\theta_j]^2,
$$
where $C>0$ is independent of the time step size $h$. 
A discrete nonlinear Gronwall lemma
then shows that $S[n_j,n_j\theta_j]$ behaves like $1/(hj)=1/t_j$, and in the
limit $h\to 0$, we obtain the result.

The paper is organized as follows. We prove Theorem \ref{thm.ex} and Corollary
\ref{coro.ex} in Section \ref{sec.ex}. Section \ref{sec.long} is devoted
to the proof of Theorem \ref{thm.long}. The numerical results in one space
dimension presented in Section \ref{sec.num} indicate that the existence
of solutions still holds for $\beta<-\frac12$ and $\beta>\frac12$ and that
the solutions converge exponentially fast to the steady state.


\section{Global existence of solutions}\label{sec.ex}

We prove Theorem \ref{thm.ex} and Corollary \ref{coro.ex}.

{\em Step 1: Reformulation.}
Let $T>0$, $N\in\N$, and set $h=T/N$. We consider the semi-discrete equations
\begin{align}
  \frac{1}{h}(n_j-n_{j-1}) &= \Delta(n_j\theta_j^{1/2-\beta}), \quad j=1,\ldots,N, 
	\label{ex.nj} \\
	\frac{1}{h}(n_j\theta_j - n_{j-1}\theta_{j-1})
	&= \kappa\Delta(n_j\theta_j^{3/2-\beta}) + \frac{1}{\tau}n_j(1-\theta_j)
	\label{ex.tj}
\end{align}
with the boundary conditions \eqref{1.bc}.
The idea is to reformulate the elliptic equations in terms of the new variables
$$
  u_j = n_j\theta_j^{1/2-\beta}, \quad v_j = n_j\theta_j^{3/2-\beta}.
$$
Observing that $n_j=u_j^{3/2-\beta}v_j^{\beta-1/2}$ and $\theta_j=v_j/u_j$,
equations \eqref{ex.nj}-\eqref{ex.tj} are formally equivalent to
\begin{align}
  u_j^{3/2-\beta}v_j^{\beta-1/2} - h\Delta u_j 
	&= u_{j-1}^{3/2-\beta}v_{j-1}^{\beta-1/2}, \label{ex.uj} \\
	u_j^{1/2-\beta}v_j^{\beta+1/2} - \kappa h\Delta v_j
	- \frac{h}{\tau}u_j^{1/2-\beta}v_{j-1}^{\beta-1/2}(u_j-v_j)
	&= u_{j-1}^{1/2-\beta}v_{j-1}^{\beta+1/2}. \label{ex.vj}
\end{align}
The boundary conditions become
\begin{align}
  & u_j = u_D := n_D\theta_D^{1/2-\beta}, \quad 
	v_j = v_D := n_D\theta_D^{3/2-\beta}
	\quad\mbox{on }\Gamma_D, \label{ex.bc1} \\
	& \na u_j\cdot\nu = \na v_j\cdot\nu = 0\quad\mbox{on }\Gamma_N. \label{ex.bc2}
\end{align}
In order to show the existence of weak solutions to this discretized system,
we need to truncate. For this, let $j\ge 1$ and let
$u_{j-1}$, $v_{j-1}\in L^2(\Omega)$
be given such that $\inf_\Omega u_{j-1}>0$, $\inf_\Omega v_{j-1}>0$,
$\sup_\Omega u_{j-1}<+\infty$, and $\sup_\Omega v_{j-1}<+\infty$. We define
\begin{equation}\label{ex.M}
  M = \max\left\{\kappa\sup_\Omega\frac{u_{j-1}}{v_{j-1}},
	\frac{1}{\inf_{\Gamma_D}\theta_D}\right\}
\end{equation}
and $\eps=1/M$. The truncated problem reads as
\begin{align}
  u_j\theta_{j,\eps}^{\beta-1/2}-h\Delta u_j 
	&= u_{j-1}^{3/2-\beta}v_{j-1}^{\beta-1/2}, \label{ex.ujt} \\
	\left(1+\frac{h}{\tau}\right)v_j\theta_{j,\eps}^{\beta-1/2}
	- \kappa h\Delta v_j - \frac{h}{\tau}u_j\theta_{j,\eps}^{\beta-1/2}
	&= u_{j-1}^{1/2-\beta}v_{j-1}^{\beta+1/2}, \label{ex.vjt}
\end{align}
where $\theta_{j,\eps}=\max\{\eps,v_j/u_j\}$. Note that if $u_j>0$ and
$v_j/u_j\ge \eps$ in $\Omega$ then \eqref{ex.ujt}-\eqref{ex.vjt} are equivalent
to \eqref{ex.uj}-\eqref{ex.vj}. 

{\em Step 2: Solution of the truncated semi-discrete problem.}
We define the operator $F:L^2(\Omega)\times[0,1]\to L^2(\Omega)$ by
$F(\theta,\sigma)=v/u$, where $(u,v)\in H^1(\Omega)^2$ is the unique solution
to the linear system
\begin{align}
  \sigma u\theta_\eps^{\beta-1/2} - h\Delta u 
	&= \sigma u_{j-1}^{3/2-\beta}v_{j-1}^{\beta-1/2}
	= \sigma u_{j-1}\left(\frac{u_{j-1}}{v_{j-1}}\right)^{1/2-\beta}, 
	\label{ex.ut} \\
	\sigma\left(1+\frac{h}{\tau}\right)v\theta_\eps^{\beta-1/2} 
	- \kappa h\Delta v
	-\sigma\frac{h}{\tau} u\theta_\eps^{\beta-1/2}
	&= \sigma u_{j-1}^{1/2-\beta}v_{j-1}^{\beta+1/2}
	= \sigma v_{j-1}\left(\frac{u_{j-1}}{v_{j-1}}\right)^{1/2-\beta}, \label{ex.vt}
\end{align}
where $\theta_\eps=\max\{\eps,\theta\}$, with the boundary conditions 
\begin{equation}\label{ex.bct}
  u = 1+\sigma(u_D-1),\ v=\sigma v_D\quad\mbox{on }\Gamma_D, \quad
	\na u\cdot\nu = \na v\cdot\nu = 0 \quad\mbox{on }\Gamma_N.
\end{equation}
We have to prove that the operator $F$ is well defined.

First, observe that \eqref{ex.ut} does not depend on $v$ and that
the right-hand side is an element of $L^2(\Omega)$. Therefore, by
standard theory of elliptic equations, we infer the existence of a unique solution
$u\in H^1(\Omega)$ to \eqref{ex.ut} with the corresponding boundary
conditions in \eqref{ex.bct}. With given $u$, there exists a unique solution
$v\in H^1(\Omega)$ to \eqref{ex.vt} with the corresponding boundary conditions.
It remains to show that $u$ and $v$ are strictly positive in $\Omega$ such that
the quotient $v/u$ is defined and an element of $L^2(\Omega)$.

To this end, we employ the Stampacchia truncation method. Let
$$
  m_1 = \min\left\{\inf_{\Gamma_D}u_D,
	\eps^{1/2-\beta}\inf_\Omega u_{j-1}^{3/2-\beta}v_{j-1}^{\beta-1/2}\right\}> 0.
$$
Note that $m_1>0$ because of our boundedness assumptions on 
$\inf_\Omega u_{j-1}$ and $\sup_\Omega v_{j-1}$.
Then $(u-m_1)_-=\min\{0,u-m_1\}\in H_D^1(\Omega)$ is an admissible test function
in the weak formulation of \eqref{ex.ut} yielding
\begin{align*}
  h\int_\Omega & |\na(u-m_1)_-|^2 dx + \sigma\int_\Omega\theta_\eps^{\beta-1/2}
	(u-m_1)_-^2 dx \\
	&= \sigma\int_\Omega\big(u_{j-1}^{3/2-\beta}v_{j-1}^{\beta-1/2}
	- m_1\theta_\eps^{\beta-1/2}\big)(u-m_1)_- dx \\
	&\le \sigma\int_\Omega\big(u_{j-1}^{3/2-\beta}v_{j-1}^{\beta-1/2}
	- m_1\eps^{\beta-1/2}\big)(u-m_1)_- dx \le 0, 
\end{align*}
taking into account $\theta_\eps^{\beta-1/2}\le \eps^{\beta-1/2}$ (observe
that $\beta<1/2$) and the definition of $m_1$. This implies that 
$(u-m_1)_-=0$ and consequently $u\ge m_1>0$ in $\Omega$. Defining
$$
  m_2 = \min\left\{\inf_{\Gamma_D}v_D,
	\left(1+\frac{h}{\tau}\right)^{-1}
	\eps^{1/2-\beta}\inf_\Omega u_{j-1}^{1/2-\beta}v_{j-1}^{\beta+1/2}\right\}> 0
$$
and employing the test function $(v-m_2)_-\in H_D^1(\Omega)$ in the weak formulation
of \eqref{ex.vt}, a similar computation as above and 
$\theta_\eps^{\beta-1/2}\le\eps^{\beta-1/2}$ yield
\begin{align*}
  \kappa h\int_\Omega & |\na(v-m_2)_-|^2 dx
	+ \sigma\left(1+\frac{h}{\tau}\right)\int_\Omega\theta_\eps^{\beta-1/2}(v-m_2)_-^2 
	dx 
	- \frac{\sigma h}{\tau}\int_\Omega u\theta_\eps^{\beta-1/2}(v-m_2)_-dx \\
	&= \sigma\int_\Omega\left((u_{j-1}^{1/2-\beta}v_{j-1}^{\beta+1/2}
	- \left(1+\frac{h}{\tau}\right)m_2\theta_\eps^{\beta-1/2}\right)(v-m_2)_- dx
	\le 0.
\end{align*}
Since the integrals on the left-hand side are nonnegative, we conclude that
$v\ge m_2>0$ in $\Omega$. This shows that $u$ and $v$ are strictly positive with
a lower bound which depends on $\eps$ and $j$. Because of $1/u\in L^\infty(\Omega)$
and $u,v\in H^1(\Omega)\hookrightarrow L^6(\Omega)$, $v/u\in W^{1,3/2}(\Omega)
\hookrightarrow L^2(\Omega)$ for $d\le 3$. Hence, the operator $F$ is 
well defined and its image is contained in $W^{1,3/2}(\Omega)$.

Standard arguments and the compact embedding $W^{1,3/2}(\Omega)\hookrightarrow
L^2(\Omega)$ ensure that $F$ is continuous and compact. When $\sigma=0$,
it follows that $u=1$ and $v=0$ and thus, $F(\theta,0)=0$. 
Let $\theta\in L^2(\Omega)$ be a fixed point of $F(\cdot,\sigma)$. 
Then $v/u=\theta$. By standard elliptic estimates, 
we obtain $H^1$ bounds for $u$ and $v$ independently of $\sigma$.
Since $u$ is strictly positive, we infer an $L^2$ bound for $\theta$
independently of $\sigma$. Thus, we may apply the 
Leray-Schauder fixed-point theorem
to conclude the existence of a fixed point of $F(\cdot,1)$, i.e.\ of a solution
$(u,v)=(u_j,v_j)\in H^1(\Omega)^2$ to \eqref{ex.ujt}-\eqref{ex.vjt} 
with boundary conditions
\eqref{ex.bc1}-\eqref{ex.bc2}. 

In order to close the recursion, we need to show
that $\sup_\Omega u_j<+\infty$ and $\sup_\Omega v_j<+\infty$. 
We employ the following result which is due to Stampacchia \cite{Sta66}:
Let $w\in H^1(\Omega)$ be the unique solution to $-\Delta w+a(x)w=f$ 
with mixed Dirichlet-Neumann boundary conditions and let 
$a\in L^\infty(\Omega)$ be nonnegative and $f\in L^s(\Omega)$ with $s>d/2$.
Then $w\in L^\infty(\Omega)$ with a bound which depends only on $f$,
$\Omega$, and the boundary data. Since the right-hand side of \eqref{ex.ut}
is an element of $L^2(\Omega)$ and $d\le 3$, we find from the above
result that the solution
$u$ to \eqref{ex.ut} is bounded. Furthermore, $v$ solves (see \eqref{ex.vt})
$$
  \sigma\left(1+\frac{h}{\tau}\right)v\theta_\eps^{\beta-1/2}
	- \kappa h\Delta v = \sigma\frac{h}{\tau}u\theta_\eps^{\beta-1/2}
	+ \sigma u_{j-1}^{1/2-\beta}v_{j-1}^{\beta+1/2}\in L^\infty(\Omega),
$$
taking advantage of the $L^\infty$ bound for $u$. By Stampacchia's result,
$v\in L^\infty(\Omega)$. This shows the desired bounds.

{\em Step 3: Removing the truncation.}
We introduce the function
$$
  \phi(x) = \left\{\begin{array}{ll}
	0 & \quad\mbox{if }x\le M, \\
	1+\cos(\pi x/M) &\quad\mbox{if }M\le x\le 2M, \\
	2 &\quad\mbox{if }x\ge 2M,
	\end{array}\right.
$$
where we recall the definition \eqref{ex.M} of $M$. In particular,
$\phi\in C^1(\R)$ satisfies $\phi'\ge 0$ in $\R$.
Since $M\ge 1/\inf_{\Gamma_D}\theta_D$, we have
$\phi(u_j/v_j)=\phi(u_D/v_D)=\phi(1/\theta_D)=0$ on $\Gamma_D$. 
Because $\phi'$ vanishes outside of the interval $[M,2M]$, it holds that
$u_j\phi(u_j/v_j)$, $v_j\phi(u_j/v_j)\in H^1(\Omega)$.
Consequently, $v_j\phi(u_j/v_j)$ and $\kappa^{-1} u_j\phi(u_j/v_j)$
are admissible test functions in $H_D^1(\Omega)$
for \eqref{ex.ujt} and \eqref{ex.vjt},
respectively, which gives the two equations
\begin{align*}
  & \int_\Omega u_j\theta_{j,\eps}^{\beta-1/2}v_j\phi\left(\frac{u_j}{v_j}\right)dx
	+ h\int_\Omega\na u_j\cdot\na\left(v_j\phi\left(\frac{u_j}{v_j}\right)\right)dx
	= \int_\Omega u_{j-1}^{3/2-\beta}v_{j-1}^{\beta-1/2}v_j
	\phi\left(\frac{u_j}{v_j}\right)dx, \\
	& \frac{1}{\kappa}\left(1+\frac{h}{\tau}\right)\int_\Omega
	v_j\theta_{j,\eps}^{\beta-1/2}u_j\phi\left(\frac{u_j}{v_j}\right)dx
	+ h\int_\Omega\na v_j\cdot\na\left(u_j\phi\left(\frac{u_j}{v_j}\right)
	\right)dx \\
	&\phantom{xxxxxxx}{}-\frac{h}{\kappa\tau}\int_\Omega 
	u_j^2\theta_{j,\eps}^{\beta-1/2}
	\phi\left(\frac{u_j}{v_j}\right)dx
	= \frac{1}{\kappa}\int_\Omega u_{j-1}^{1/2-\beta}v_{j-1}^{\beta+1/2}
	u_j\phi\left(\frac{u_j}{v_j}\right) dx.
\end{align*}
We take the difference of these equations:
\begin{align}
  &\left(1-\frac{1}{\kappa}\left(1+\frac{h}{\tau}\right)\right)\int_\Omega
	u_jv_j\theta_{j,\eps}^{\beta-1/2}\phi\left(\frac{u_j}{v_j}\right)dx \nonumber \\
  &\phantom{xxx}{}
	+ h\int_\Omega(v_j\na u_j-u_j\na v_j)\cdot\na\phi\left(\frac{u_j}{v_j}\right)dx
	+ \frac{h}{\kappa\tau}\int_\Omega u_j^2\theta_{j,\eps}^{\beta-1/2}
	\phi\left(\frac{u_j}{v_j}\right)dx \nonumber \\
	&\phantom{xxx}{}
	+ \frac{1}{\kappa}\int_\Omega u_{j-1}^{1/2-\beta}v_{j-1}^{\beta+1/2}
	v_j\phi\left(\frac{u_j}{v_j}\right)\left(\frac{u_j}{v_j}
	-\kappa\frac{u_{j-1}}{v_{j-1}}\right)dx = 0. \label{ex.aux}
\end{align}
Since $\beta<1/2$, we have $\kappa=\frac23(2-\beta)>1$. Therefore, we can choose
$0<h<(\kappa-1)\tau$ which implies that $1-\kappa^{-1}(1+h/\tau)>0$,
and the first integral is nonnegative. The same conclusion holds for the
second integral in \eqref{ex.aux} since
$$
  (v_j\na u_j-u_j\na v_j)\cdot\na\phi\left(\frac{u_j}{v_j}\right)
	= \frac{1}{v_j^2}\phi'\left(\frac{u_j}{v_j}\right)
	|v_j\na u_j-u_j\na v_j|^2	\ge 0.
$$
Also the third integral in \eqref{ex.aux} is nonnegative.
Hence, the fourth integral is nonpositive, which can be equivalently written as
$$
  \int_\Omega u_{j-1}^{1/2-\beta}v_{j-1}^{\beta+1/2}
	v_j\phi\left(\frac{u_j}{v_j}\right)\left(\frac{u_j}{v_j}
	-M\right)dx
	\le \int_\Omega u_{j-1}^{1/2-\beta}v_{j-1}^{\beta+1/2}
	v_j\phi\left(\frac{u_j}{v_j}\right)\left(\kappa\frac{u_{j-1}}{v_{j-1}}
	-M\right)dx.
$$
Taking into account definition \eqref{ex.M} of $M$, we infer that
the integral on the right-hand side is nonpositive, which shows that
$$
  \int_\Omega u_{j-1}^{1/2-\beta}v_{j-1}^{\beta+1/2}
	v_j\phi\left(\frac{u_j}{v_j}\right)\left(\frac{u_j}{v_j}
	-M\right)_+dx = 0,
$$
where $z_+=\max\{0,z\}$ for $z\in\R$, employing $\phi(u_j/v_j)=0$
for $u_j/v_j\le M$. Now, $\phi(u_j/v_j)>0$ for $u_j/v_j>M$, and we conclude that
$(u_j/v_j-M)_+=0$ and $u_j/v_j\le M$ in $\Omega$. Since $\eps=1/M$,
this means that $v_j/u_j\ge\eps$ and $\theta_{j,\eps}=v_j/u_j$. 
Consequently, we have proven the existence of a weak solution $(v_j,u_j)$
to the discretized problem \eqref{ex.uj}-\eqref{ex.vj} with the boundary conditions
\eqref{ex.bc1}-\eqref{ex.bc2}, which also yields a weak solution
$(n_j,\theta_j)$ to \eqref{ex.nj}-\eqref{ex.tj} with the boundary conditions
\eqref{1.bc}. 

{\em Step 4: Entropy estimates.} Let $b\in\R$ and define the functional
\begin{equation}\label{ex.defphi}
  \phi_b[n,n\theta] = \int_\Omega\left(f_b(n,n\theta) - f_{b,D} 
	- \frac{\pa f_{b,D}}{\pa n}(n-n_D) - \frac{\pa f_{b,D}}{\pa (n\theta)}
	(n\theta-n_D\theta_D)\right)dx,
\end{equation}
where $f_b(n,n\theta) = n^{2-b}(n\theta)^b$ and we have employed the abbreviations
$$
  f_{b,D} = f_b(n_D,n_D\theta_D), \quad
	\frac{\pa f_{b,D}}{\pa n} = \frac{\pa f_b}{\pa n}(n_D,n_D\theta_D), \quad
	\frac{\pa f_{b,D}}{\pa(n\theta)} = \frac{\pa f_b}{\pa(n\theta)}(n_D,n_D\theta_D).
$$
The function $f_b$ is convex if $b\ge 2$ or $b\le 0$ since 
$\det D^2f_b(n,n\theta)=b(b-2)\theta^{2(\beta-1)}$ and
$\textnormal{tr} D^2 f_b(n,n\theta)=(b-1)(b-2)\theta^b + b(b-1)\theta^{b-2}$.
We wish to derive a priori estimates from the so-called entropy functionals
$$
  S_{b_1,b_2}[n,n\theta] = \frac{1}{|b_1|}\phi_{b_1}[n,n\theta]
	+ \frac{1}{|b_2|}\phi_{b_2}[n,n\theta].
$$
The parameters $(b_1,b_2)$ are chosen from the following set:
$$
  N_\beta = \big\{(b_1,b_2)\in\R^2:b_1,b_2\in N^*_\beta,\
	b_1\le b_2,\ b_1\le\beta-\tfrac12,\
	b_2\ge\tfrac52-\beta,\big\},
$$
where $N^*_\beta$ consists of all $b\in\R$ such that $(1-2\beta)b+6>0$ and
$$
  4(2\beta-1)b^3 + 4(4\beta^2-12\beta+11)b^2 + (8\beta^3-44\beta^2+70\beta-73)b
	- 6(2\beta-1)^2 > 0.
$$
The set of all $(\beta,b)$ such that $b\in N^*_\beta$ is illustrated in
Figure \ref{fig.Nstarbeta}. In particular, we have $b\ge 2$ or $b\le 0$ for all
$b\in N^*_\beta$ with $-\frac12<\beta<\frac12$. It is not difficult to check that
$(\beta-\frac12,5)\in N_\beta$ for all $-\frac12<\beta<\frac12$.

\begin{figure}[ht]
\includegraphics[width=70mm]{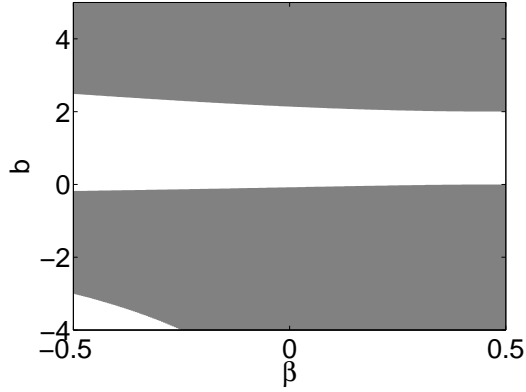}
\caption{The gray regions represent all points $(\beta,b)$ such that $b\in N^*_\beta$.}
\label{fig.Nstarbeta}
\end{figure}

\begin{lemma}[Discrete entropy inequality]\label{lem.S}
Let $(b_1,b_2)\in N_\beta$. Then
\begin{align}
  S_{b_1,b_2}[n_j,n_j & \theta_j] + C_1 h\int_\Omega\big(\theta_j^{b_1+1/2-\beta}
	+ \theta_j^{b_2+1/2-\beta}\big)|\na n_j|^2 dx \nonumber \\
	&\phantom{xx}{}+ C_1 h\int_\Omega n_j^2\big(\theta_j^{b_1-3/2-\beta}
	+ \theta_j^{b_2-3/2-\beta}\big)|\na\theta_j|^2 dx \nonumber \\
	&\le C_2 h + S_{b_1,b_2}[n_{j-1},n_{j-1}\theta_{j-1}], \label{ex.ei}
\end{align}
where $C_1>0$ depends on $b$ and $\beta$ and $C_2>0$ depends on $\tau$, $n_D$, 
and $\theta_D$. The constant $C_2$ vanishes if $n_D=\mbox{const}$.\ and $\theta_D=1$.
\end{lemma}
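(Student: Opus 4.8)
The plan is to test the two discretized equations \eqref{ex.uj}--\eqref{ex.vj} (equivalently \eqref{ex.nj}--\eqref{ex.tj}, since Step~3 removed the truncation) with the derivatives of the convex function $f_b$ evaluated at the new iterate, and to exploit convexity of $f_b$ to turn the discrete time derivative into a genuine telescoping difference. Concretely, I would use $\partial f_b/\partial n(n_j,n_j\theta_j)-\partial f_b/\partial n(n_D,n_D\theta_D)$ as test function for the $n$-equation and $\partial f_b/\partial(n\theta)(n_j,n_j\theta_j)-\partial f_b/\partial(n\theta)(n_D,n_D\theta_D)$ for the $(n\theta)$-equation, then add. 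These test functions lie in $H_D^1(\Omega)$ because of the subtraction of the Dirichlet values and the $H^1$-regularity established in Step~2. The convexity inequality $f_b(\rho_j)-f_b(\rho_{j-1})\le \nabla f_b(\rho_j)\cdot(\rho_j-\rho_{j-1})$ (with $\rho=(n,n\theta)$) converts the two discrete time-difference terms, after dividing by $h$ and using definition \eqref{ex.defphi}, into $\tfrac1h\big(\phi_b[n_j,n_j\theta_j]-\phi_b[n_{j-1},n_{j-1}\theta_{j-1}]\big)$ up to the linear Dirichlet correction, which is exactly the term that vanishes in \eqref{ex.defphi} by construction. Summing the $b=b_1$ and $b=b_2$ contributions with weights $1/|b_1|$, $1/|b_2|$ produces the left-hand side $S_{b_1,b_2}[n_j,n_j\theta_j]-S_{b_1,b_2}[n_{j-1},n_{j-1}\theta_{j-1}]$.

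Next I would identify the diffusion terms. After integrating by parts, the gradient contribution from the $n$-equation is $h\int_\Omega \nabla u_j\cdot\nabla\big(\partial_n f_b(n_j,n_j\theta_j)\big)\,dx$ and similarly for the $v$-equation with $\partial_{(n\theta)}f_b$; writing $u_j=n_j\theta_j^{1/2-\beta}$, $v_j=n_j\theta_j^{3/2-\beta}$ and expressing everything through $\nabla n_j$ and $\nabla\theta_j$, the combined quadratic form is $h\int_\Omega \nabla\rho_j^\top\, B_b(n_j,\theta_j)\,\nabla\rho_j\,dx$ for an explicit $2\times2$ matrix $B_b$ whose entries are monomials in $n_j,\theta_j$. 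The crux is to show this quadratic form dominates $C_1h\int_\Omega(\theta_j^{b+1/2-\beta}|\nabla n_j|^2+n_j^2\theta_j^{b-3/2-\beta}|\nabla\theta_j|^2)\,dx$. This reduces to a pointwise algebraic condition: the symmetric matrix obtained in the $(\nabla n_j,\nabla\theta_j)$ variables must be positive definite with a uniform lower bound after factoring out $\theta_j^{b+1/2-\beta}$ (for the $n_j$-part) respectively $n_j^2\theta_j^{b-3/2-\beta}$ (for the $\theta_j$-part). I expect the determinant and trace conditions for this to be precisely the two inequalities defining $N^*_\beta$ — the cubic in $b$ being the positivity of the relevant determinant and $(1-2\beta)b+6>0$ controlling a diagonal entry or the coupling. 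This algebraic verification, matching the diffusion form against the claimed dissipation and producing exactly the stated inequalities for $N^*_\beta$, is the main obstacle and the technical heart of the lemma; it is essentially a (somewhat lengthy) computation with explicit polynomials, and one must be careful that it holds for all relevant $b\ge2$ or $b\le0$ where $f_b$ is convex.

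Finally, the right-hand side: the reaction term $\tfrac1\tau\int_\Omega n_j(1-\theta_j)\big(\partial_{(n\theta)}f_b(n_j,n_j\theta_j)-\partial_{(n\theta)}f_{b,D}\big)\,dx$ must be bounded by $C_2h$. Using $\partial_{(n\theta)}f_b=b\,n^{2-b}(n\theta)^{b-1}=b\,\theta^{b-1}$ and the fact that $n_j,\theta_j$ may be large only in regions where we can absorb them: I would split the reaction integrand, use Young's inequality to absorb any genuinely bad power of $\theta_j$ (or $n_j$) into a small fraction of the dissipation integrals just produced, and bound the remainder by constants depending on $\tau$, $n_D$, $\theta_D$. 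When $\theta_D=1$ and $n_D$ is constant, the Dirichlet correction term $\partial_{(n\theta)}f_{b,D}=b$ is constant and $\int_\Omega n_j(1-\theta_j)\,dx$ combines with the structure so that the ``constant'' part cancels; more precisely the reaction contribution becomes a perfect sign-definite quantity (one checks $n(1-\theta)(\theta^{b-1}-1)\le0$ for the convexity-compatible range of $b$, using monotonicity of $\theta\mapsto\theta^{b-1}$), hence $C_2=0$. Summing over the two values $b_1,b_2$ with the given weights, choosing $h<(\kappa-1)\tau$ as in Step~3 so all earlier constructions remain valid, and keeping the $\varepsilon$ of absorption small yields \eqref{ex.ei} with the stated constants.
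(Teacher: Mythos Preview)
Your overall architecture is right and matches the paper: test with $\partial_n f_b-\partial_n f_{b,D}$ and $\partial_{(n\theta)}f_b-\partial_{(n\theta)}f_{b,D}$, use convexity for the time increment, and show the diffusion produces a positive definite quadratic form in $(\nabla n_j,\nabla\theta_j)$ whose determinant and trace conditions are precisely the two inequalities defining $N^*_\beta$. That part is essentially the paper's proof.

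However, there is a genuine gap. You never use the extra conditions $b_1\le\beta-\tfrac12$ and $b_2\ge\tfrac52-\beta$ that distinguish $N_\beta$ from $N^*_\beta\times N^*_\beta$, and without them the proof does not close. Two places where they are indispensable:
\begin{itemize}
\item You dropped the boundary-data gradient terms. The test function is $\partial_n f_{b,j}-\partial_n f_{b,D}$, so after integration by parts you also get $\int_\Omega\nabla(\partial_n f_{b,D})\cdot\nabla(n_j\theta_j^{1/2-\beta})\,dx$ and the analogous $v$-term. Young's inequality on these produces $\delta\int_\Omega(\theta_j^{1-2\beta}+\theta_j^{3-2\beta})|\nabla n_j|^2+\delta\int_\Omega n_j^2(\theta_j^{-1-2\beta}+\theta_j^{1-2\beta})|\nabla\theta_j|^2$. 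To absorb these into the dissipation $\int(\theta_j^{b_1+1/2-\beta}+\theta_j^{b_2+1/2-\beta})|\nabla n_j|^2+\ldots$ you need $b_1+\tfrac12-\beta\le 0$ and $b_2+\tfrac12-\beta\ge 3-2\beta$, i.e.\ exactly $b_1\le\beta-\tfrac12$, $b_2\ge\tfrac52-\beta$.
\item In the general (nonconstant boundary) case the reaction term is estimated via Young and Poincar\'e, which again generates $\delta\int(1+\theta_j^2)|\nabla n_j|^2+\delta\int n_j^2|\nabla\theta_j|^2$, and the same exponent comparison is required for absorption.
\end{itemize}

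A second, smaller error: your claim that $n(1-\theta)(\theta^{b-1}-1)\le 0$ ``for the convexity-compatible range of $b$'' is false for $b\le 0$ (then $\theta\mapsto\theta^{b-1}$ is decreasing and the product is nonnegative). The correct mechanism, which the paper uses, is that after weighting by $1/|b|$ the factor $b/|b|$ flips the sign for $b_1<0$, and the \emph{sum} over $b_1,b_2$ gives $\int n_j^2(1-\theta_j)(\theta_j^{b_2-1}-\theta_j^{b_1-1})\,dx\le 0$ because $b_1\le b_2$. Sign-definiteness is a feature of the pair, not of each $b$ separately.
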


\begin{proof}
We abbreviate
$$
  f_{b,j} = f_b(n_j,n_j\theta_j), \quad
	\frac{\pa f_{b,j}}{\pa n} = \frac{\pa f_b}{\pa n}(n_j,n_j\theta_j), \quad
	\frac{\pa f_{b,j}}{\pa(n\theta)} = \frac{\pa f_b}{\pa(n\theta)}(n_j,n_j\theta_j).
$$
Let $b=b_1$ or $b=b_2$. We already observed that $b\ge 2$ or $b\le 0$. Hence,
$f_b(n,n\theta)$ is convex, and using \eqref{ex.nj}-\eqref{ex.tj}, we compute
\begin{align}
  \frac{1}{h}\big( & \phi_b[n_j,n_j\theta_j] 
	- \phi_b[n_{j-1},n_{j-1}\theta_{j-1}]\big) \nonumber \\
  &= \frac{1}{h}\int_\Omega\left((f_{b,j}-f_{b,j-1})
  - \frac{\pa f_{b,D}}{\pa n}(n_j-n_{j-1}) - \frac{\pa f_{b,D}}{\pa(n\theta)}
  (n_j\theta_j-n_{j-1}\theta_{j-1})\right)dx \nonumber \\
  &\le \frac{1}{h}\int_\Omega\bigg(\left(\frac{\pa f_{b,j}}{\pa n}
	-\frac{\pa f_{j,D}}{\pa n}\right)(n_j-n_{j-1}) 
	+ \left(\frac{\pa f_{j,b}}{\pa(n\theta)}-\frac{\pa f_{b,D}}{\pa(n\theta)}\right)
	(n_j\theta_j-n_{j-1}\theta_{j-1})\bigg)dx \nonumber \\
	&= -\int_\Omega\na\left(\frac{\pa f_{b,j}}{\pa n}
	-\frac{\pa f_{j,D}}{\pa n}\right)\cdot\na(n_j\theta_j^{1/2-\beta})dx \nonumber \\
	&\phantom{xx}{}- \kappa\int_\Omega\na \left(\frac{\pa f_{j,b}}{\pa(n\theta)}
	-\frac{\pa f_{b,D}}{\pa(n\theta)}\right)\cdot\na(n_j\theta_j^{3/2-\beta})dx 
	\nonumber \\
	&\phantom{xx}{}+ \frac{1}{\tau}\int_\Omega \left(\frac{\pa f_{j,b}}{\pa(n\theta)}
	-\frac{\pa f_{b,D}}{\pa(n\theta)}\right)n_j(1-\theta_j)dx. \label{ex.phi}
\end{align}
We estimate these integrals term by term.
First, we compute
\begin{align*}
  \int_\Omega & \left(\na\frac{\pa f_{b,j}}{\pa n}\cdot\na(n_j\theta_j^{1/2-\beta})
	+ \kappa\na\frac{\pa f_{j,b}}{\pa(n\theta)}
	\cdot\na(n_j\theta_j^{3/2-\beta})\right)dx \\
	&= \int_\Omega\big(A\theta_j^{b+1/2-\beta}|\na n_j|^2 
	+ 2Bn_j\theta_j^{b-1/2-\beta}\na n_j\cdot\na\theta_j
	+ Cn_j^2\theta_j^{b-3/2-\beta}|\na\theta_j|^2\big)dx,
\end{align*}
where, taking into account that $\kappa=\frac23(2-\beta)$,
\begin{align*}
  A &= \frac13(-2b\beta+b+6), \\
	B &= \frac{1}{12}(-2b\beta+b+6)(2b-2\beta+1), \\
	C &= \frac16 b(4b\beta^2-8b\beta-4\beta^2+9b+2\beta-6).
\end{align*}
The above integrand defines a quadratic form in $\theta_j^{(b+1/2-\beta)/2}
\na n_j$ and $n_j\theta_j^{(b-3/2-\beta)/2}\na\theta_j$
which is positive definite if and only if $A>0$ and $AC-B^2>0$. These two
conditions are equivalent to
\begin{align*}
  & (1-2\beta)b+6 > 0, \\
  & 4(2\beta-1)b^3 + 4(4\beta^2-12\beta+11)b^2 + (8\beta^3-44\beta^2+70\beta-73)b
	- 6(2\beta-1)^2 > 0,
\end{align*}
and these inequalities define the set $N^*_\beta$.
We infer that there exists a constant $C_1>0$ such that
\begin{align*}
  \int_\Omega & \left(\na\frac{\pa f_{b,j}}{\pa n}\cdot\na(n_j\theta_j^{1/2-\beta})
	+ \kappa\na\frac{\pa f_{j,b}}{\pa(n\theta)}
	\cdot\na(n_j\theta_j^{3/2-\beta})\right)dx \\
	&\phantom{xx}\ge C_1\int_\Omega\left(\theta_j^{b+1/2-\beta}|\na n_j|^2
	+ n_j^2\theta_j^{b-3/2-\beta}|\na\theta_j|^2\right)dx.
\end{align*}
The first two terms on the right-hand side of  \eqref{ex.phi}
involving the boundary contributions only are estimated by using
the Young inequality with $\delta>0$:
\begin{align*}
  \int_\Omega & \left( \na\frac{\pa f_{b,D}}{\pa n}\cdot
	\na(n_j\theta_j^{1/2-\beta})
	+ \kappa\na\frac{\pa f_{b,D}}{\pa(n\theta)}\cdot\na(n_j\theta_j^{3/2-\beta})
	\right)dx \\
	&\le \frac{1}{2\delta}\int_\Omega\left|\na\frac{\pa f_{b,D}}{\pa n}\right|^2 dx
	+ \frac{1}{2\delta}\int_\Omega\left|\na\frac{\pa f_{b,D}}{\pa(n\theta)}
	\right|^2 dx \\
	&\phantom{xx}{}+ \frac{\delta}{2}\int_\Omega\big(|\na(n_j\theta_j^{1/2-\beta})
	|^2
	+ \kappa^2|\na(n_j\theta_j^{3/2-\beta})|^2\big)dx \\
	&\le \frac{1}{2\delta}\int_\Omega\left|\na\frac{\pa f_{b,D}}{\pa n}\right|^2 dx
	+ \frac{1}{2\delta}\int_\Omega\left|\na\frac{\pa f_{b,D}}{\pa(n\theta)}\right|^2
	dx \\
	&\phantom{xx}{}
	+ C\delta\int_\Omega\big((\theta_j^{1-2\beta}+\theta_j^{3-2\beta})|\na n_j|^2
	+ n_j^2(\theta_j^{-1-2\beta}+\theta_j^{1-2\beta})|\na\theta_j|^2\big)dx,
\end{align*}
where $C>0$ depends only on $\beta$.
It remains to investigate the last integral in \eqref{ex.phi}
involving the relaxation term.
Since $\beta<1/2$, we have $b_1<0$ and $b_2>0$. Then
\begin{align*}
  \frac{1}{\tau}
  \sum_{b=b_1,b_2} & \frac{1}{|b|} \int_\Omega\left(\frac{\pa f_{j,b}}{
	\pa(n\theta)}
	-\frac{\pa f_{b,D}}{\pa(n\theta)}\right)n_j(1-\theta_j)dx \\
  &= \frac{1}{\tau}\sum_{b=b_1,b_2}\frac{b}{|b|}\int_\Omega
	(n_j\theta_j^{b-1}-n_D\theta_D^{b-1})n_j(1-\theta_j)dx \\
	&= -\frac{1}{\tau}\int_\Omega n_j^2\theta_j^{b_1-1}(\theta_j-1)
	(\theta_j^{b_2-b_1}-1)dx
	+ \frac{1}{\tau}\int_\Omega n_j n_D(\theta_j-1)
	(\theta_D^{b_2-1}-\theta_D^{b_1-1})dx.
\end{align*}
Since $b_1\le b_2$, the first expression on the right-hand side is nonpositive.
The second integral is written as
\begin{align*}
  \frac{1}{\tau}\int_\Omega n_j & n_D(\theta_j-1)(\theta_D^{b_2-1}-\theta_D^{b_1-1})dx \\
	&= \frac{1}{\tau}\int_\Omega \big((n_j\theta_j-n_D\theta_D)n_D - (n_j-n_D)n_D
	+ n_D^2(\theta_D-1)\big)(\theta_D^{b_2-1}-\theta_D^{b_1-1})dx \\
	&\le \int_\Omega g_D|n_j-n_D|dx + \int_\Omega g_D|n_j\theta_j-n_D\theta_D|dx
	+ \int_\Omega g^*_D dx,
\end{align*}
where the functions
$$
  g_D = \frac{n_D}{\tau}|\theta_D^{b_2-1}-\theta_D^{b_1-1}|, \quad
	g^*_D = n_D(\theta_D-1)g_D
$$
only depend on the boundary data.
Then the Young and Poincar\'e inequalities (with constant $C>0$) give
\begin{align*}
  \frac{1}{\tau}
  \sum_{b=b_1,b_2} & \frac{1}{|b|} \int_\Omega\left(\frac{\pa f_{j,b}}{
	\pa(n\theta)}
	-\frac{\pa f_{bD}}{\pa(n\theta)}\right)n_j(1-\theta_j)dx \\
	&\le \frac{\delta}{2}\int_\Omega|n_j-nD|^2 dx 
	+ \frac{\delta}{2}\int_\Omega|n_j\theta_j-n_D\theta_D|^2 dx
	+ \int_\Omega\left(g_D^*+\frac{1}{\delta}g_D^2\right)dx \\
	&\le C\frac{\delta}{2}\int_\Omega
	\big(|\na(n_j-n_D)|^2 + |\na(n_j\theta_j-n_D\theta_D)|^2\big)dx
	+ \int_\Omega\left(g_D^*+\frac{1}{\delta}g_D^2\right)dx \\
	&\le C\delta\int_\Omega\big(|\na n_j|^2 + \theta_j^2|\na n_j|^2
	+ n_j^2|\na\theta_j|^2\big)dx
	+ C\delta\int_\Omega\big(|\na n_D|^2 + |\na(n_D\theta_D|^2\big)dx \\
	&\phantom{xx}{}+ \int_\Omega\left(g_D^*+\frac{1}{\delta}g_D^2\right)dx.
\end{align*}
Putting together the above estimations and using 
$\theta_j^2|\na n_j|^2 \le C(1+\theta_j^{3-2\beta})|\na n_j|^2$, it follows that
\begin{align}
  \frac{1}{h} & \big(S_{b_1,b_2}[n_j,n_j\theta_j] 
	- S_{b_1,b_2}[n_{j-1},n_{j-1}\theta_{j-1}]\big) \nonumber \\
	&\phantom{xx}{}
	+ C_1\int_\Omega\big((\theta_j^{b_1+1/2-\beta}+\theta_j^{b_2+1/2-\beta})
	|\na n_j|^2
	+ n_j^2(\theta_j^{b_1-3/2-\beta}+\theta_j^{b_2-3/2-\beta})|\na\theta_j|^2\big)dx 
	\nonumber \\
	&\le C\delta\int_\Omega\big((1+\theta_j^{1-2\beta}+\theta_j^{3-2\beta})
	|\na n_j|^2
	+ n_j^2(1+\theta_j^{-1-2\beta}+\theta_j^{1-2\beta})|\na\theta_j|^2\big)dx 
	+ C_2, 
	\label{ex.aux2}
\end{align}
where the constant
$$
  C_2 = \frac{1}{2\delta}\int_\Omega
	\bigg(\left|\na\frac{\pa f_{b,D}}{\pa n}\right|^2 
	+ \left|\na\frac{\pa f_{b,D}}{\pa(n\theta)}\right|^2\bigg)dx
	+ \int_\Omega\left(g_D^*+\frac{1}{\delta}g_D^2\right)dx
$$
vanishes if $n_D=\mbox{const}$.\ and $\theta_D=1$.
The conditions $b_1\le\beta-1/2$ and $b_2\ge 5/2-\beta$ 
are equivalent to $b_1+1/2-\beta\le 0$ and $b_2+1/2-\beta\ge 3-2\beta$
as well as to $b_1-3/2-\beta\le -1-2\beta$ and $b_2-3/2-\beta\ge 1-2\beta$. 
Thus, there exists a positive constant $C>0$, which depends on $b_1$, $b_2$, 
and $\beta$, such that for all $\theta_j\ge 0$,
\begin{align*}
  1+\theta_j^{1-2\beta}+\theta_j^{3-2\beta}
	&\le C(\theta_j^{b_1+1/2-\beta}+\theta_j^{b_2+1/2-\beta}), \\
  1+\theta_j^{-1-2\beta}+\theta_j^{1-2\beta}
	&\le C(\theta_j^{b_1-3/2-\beta}+\theta_j^{b_2-3/2-\beta}).	
\end{align*}
Therefore, choosing $\delta>0$ sufficiently small, the integral on the right-hand
side of \eqref{ex.aux2} can be absorbed by the corresponding integral on
the left-hand side. This finishes the proof of the lemma.
\end{proof}

{\em Step 5: The limit $h\to 0$.} We define the piecewise constant
functions $n_h(x,t)=n_j(x)$ and $\theta_h(x,t)=\theta_j(x)$ for
$x\in\Omega$ and $t\in((j-1)h,jh]$, where $0\le j\le N=T/h$. 
The discrete time derivative of an arbitrary function $w(x,t)$
is defined by $(D_h w)(x,t)=h^{-1}(w(x,t)-w(x,t-h))$
for $x\in\Omega$, $t\ge h$. 
Then \eqref{ex.nj}-\eqref{ex.tj} can be written as
\begin{equation}
  D_h n_h = \Delta(n_h\theta_h^{1/2-\beta}), \quad
	D_h(n_h\theta_h) = \kappa\Delta(n_h\theta_h^{3/2-\beta}) 
	+ \frac{n_h}{\tau}(1-\theta_h). \label{ex.nh}
\end{equation}
The entropy inequality \eqref{ex.ei} for $(b_1,b_2)=(\beta-\frac12,5)\in N_\beta$
becomes, after summation over $j$,
\begin{align}
  S_{b_1,b_2}[n_h(t),n_h(t)\theta_h(t)] 
	&+ C_1\int_0^t\int_\Omega\big((1+\theta_h^{11/2-\beta})|\na n_h|^2
	+ n_h^2(\theta_h^{-2}+\theta_h^{7/2-\beta})|\na\theta_h|^2\big)dx\,ds 
	\nonumber \\
	&\le C_2 t + S_{b_1,b_2}[n_0,n_0\theta_0]. \label{ex.ei2}
\end{align}
We will exploit this inequality to derive $h$-independent estimates for
$(n_h)$ and $(n_h\theta_h)$.

\begin{lemma}\label{lem.est}
There exists a constant $C>0$ such that for all $h>0$,
\begin{align}
  \|n_h\|_{L^\infty(0,T;L^2(\Omega))} + \|n_h\theta_h\|_{L^\infty(0,T;L^2(\Omega))}
	&\le C, \label{ex.L2.1} \\
	\|n_h\theta_h^{1/2-\beta}\|_{L^\infty(0,T;L^2(\Omega))}
	+ \|n_h\theta_h^{3/2-\beta}\|_{L^\infty(0,T;L^2(\Omega))} &\le C, 
	\label{ex.L2.2} \\
	\|n_h\|_{L^2(0,T;H^1(\Omega))} + \|n_h\theta_h\|_{L^2(0,T;H^1(\Omega))} &\le C,
	\label{ex.H1.1} \\
	\|n_h\theta_h^{1/2-\beta}\|_{L^2(0,T;H^1(\Omega))}
	+ \|n_h\theta_h^{3/2-\beta}\|_{L^2(0,T;H^1(\Omega))} &\le C, \label{ex.H1.2} \\
  \|D_h n_h\|_{L^2(h,T;H^1_D(\Omega)')}
	+ \|D_h(n_h\theta_h)\|_{L^2(h,T;H^1_D(\Omega)')} &\le C. \label{ex.H-1.1}
\end{align}
\end{lemma}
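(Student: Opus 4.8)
The plan is to derive all five estimates from the summed entropy inequality \eqref{ex.ei2}, supplemented by the Stampacchia $L^\infty$ bounds obtained in Step 2 and a duality-type argument for the time derivatives. First I would observe that the entropy functional $S_{b_1,b_2}$ is, by the convexity of $f_{b_1}$ and $f_{b_2}$ together with the subtraction of the affine tangent at the boundary data, nonnegative and in fact controls $\|n_h\|_{L^2}^2$ from above: since $b_1=\beta-\tfrac12<0$ and $b_2=5$, the term $\tfrac{1}{|b_2|}\phi_{b_2}[n_h,n_h\theta_h]$ contains $n_h^{-3}(n_h\theta_h)^5 = n_h^2\theta_h^5$ and more importantly $\phi_{b_1}$ and $\phi_{b_2}$ jointly dominate $n_h^2$ (for instance via the $b=0$-type lower bound or by noting $f_{b_1}+f_{b_2}\ge c\, n_h^2$ for a suitable $c>0$ after the affine correction, using Young's inequality to absorb the linear terms). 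The right-hand side of \eqref{ex.ei2} is bounded by $C_2T + S_{b_1,b_2}[n_0,n_0\theta_0]$, which is finite because $n_0,\theta_0\in L^\infty\cap H^1$ and $\inf n_0,\inf\theta_0>0$ by \eqref{hypo.0}, so $f_{b_i}(n_0,n_0\theta_0)\in L^1(\Omega)$ and the gradient terms in $C_2$ are finite by \eqref{hypo.D}. This gives \eqref{ex.L2.1}, and then \eqref{ex.L2.2} follows since on the support of the entropy bound the temperature is pinned between the $\eps$-dependent lower bound and a finite upper bound at each time level — but to get $h$-\emph{independent} bounds one instead writes $n_h\theta_h^{1/2-\beta}$ and $n_h\theta_h^{3/2-\beta}$ in terms of $n_h$ and $n_h\theta_h$ and uses that $\phi_{b_1}+\phi_{b_2}$ also controls these quantities in $L^2$ (the powers $1/2-\beta\in(0,1]$ and $3/2-\beta\in(1,2]$ lie in the range interpolated by $f_{b_1},f_{b_2}$).

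Next I would extract the $H^1$ estimates \eqref{ex.H1.1}–\eqref{ex.H1.2}. The dissipation term in \eqref{ex.ei2} controls $\int_0^t\int_\Omega(1+\theta_h^{11/2-\beta})|\na n_h|^2\,dx\,ds$ and $\int_0^t\int_\Omega n_h^2(\theta_h^{-2}+\theta_h^{7/2-\beta})|\na\theta_h|^2\,dx\,ds$. From the first term, dropping the weight, one gets $\na n_h\in L^2(0,T;L^2(\Omega))$ directly, and combined with \eqref{ex.L2.1} this yields $n_h\in L^2(0,T;H^1(\Omega))$. For $n_h\theta_h$ one computes $\na(n_h\theta_h)=\theta_h\na n_h + n_h\na\theta_h$; the first piece is controlled because $\theta_h^2|\na n_h|^2\le C(1+\theta_h^{11/2-\beta})|\na n_h|^2$ (the inequality $\theta^2\le C(1+\theta^{11/2-\beta})$ holds for all $\theta\ge0$ since $11/2-\beta>2$), and the second piece because $n_h^2|\na\theta_h|^2\le n_h^2(\theta_h^{-2}+\theta_h^{7/2-\beta})|\na\theta_h|^2\cdot(\text{bounded factor})$ — actually $n_h^2|\na\theta_h|^2\le C n_h^2(\theta_h^{-2}+\theta_h^{7/2-\beta})|\na\theta_h|^2$ since $1\le C(\theta^{-2}+\theta^{7/2-\beta})$ for all $\theta>0$. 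The same bookkeeping handles $\na(n_h\theta_h^{1/2-\beta})$ and $\na(n_h\theta_h^{3/2-\beta})$: write $\na(n_h\theta_h^{\alpha})=\theta_h^\alpha\na n_h + \alpha n_h\theta_h^{\alpha-1}\na\theta_h$ with $\alpha\in\{1/2-\beta,3/2-\beta\}$ and check that $\theta_h^{2\alpha}\le C(1+\theta_h^{11/2-\beta})$ and $\theta_h^{2\alpha-2}\le C(\theta_h^{-2}+\theta_h^{7/2-\beta})$ pointwise in $\theta_h\ge0$ — both reduce to elementary two-term power inequalities valid on $(0,\infty)$ for the relevant exponent ranges, using $-\tfrac12\le\beta<\tfrac12$. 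Together with the $L^\infty(0,T;L^2)$ bounds already established, this gives \eqref{ex.H1.2}.

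Finally, for the time-derivative bound \eqref{ex.H-1.1} I would test the discrete equations \eqref{ex.nh} against an arbitrary $\varphi\in L^2(h,T;H^1_D(\Omega))$: for the first equation, $\langle D_h n_h,\varphi\rangle = -\int_\Omega \na(n_h\theta_h^{1/2-\beta})\cdot\na\varphi\,dx$, so $\|D_h n_h\|_{H^1_D(\Omega)'}\le \|\na(n_h\theta_h^{1/2-\beta})\|_{L^2(\Omega)}$, and integrating the square over $(h,T)$ and invoking \eqref{ex.H1.2} gives the bound; for the second, $\langle D_h(n_h\theta_h),\varphi\rangle = -\kappa\int_\Omega\na(n_h\theta_h^{3/2-\beta})\cdot\na\varphi\,dx + \tau^{-1}\int_\Omega n_h(1-\theta_h)\varphi\,dx$, and the extra reaction term is controlled by $\|n_h\|_{L^2}\|1-\theta_h\|$ ... more precisely $n_h(1-\theta_h)=n_h-n_h\theta_h\in L^\infty(0,T;L^2(\Omega))$ by \eqref{ex.L2.1}, so it pairs against $\varphi\in H^1_D\hookrightarrow L^2$ with the right bound. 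The main obstacle I anticipate is the first step: showing carefully that the combination $\tfrac{1}{|b_1|}\phi_{b_1}+\tfrac{1}{|b_2|}\phi_{b_2}$, with its affine boundary corrections, genuinely bounds $\|n_h\|_{L^2}^2$, $\|n_h\theta_h\|_{L^2}^2$ and the intermediate powers $\|n_h\theta_h^{1/2-\beta}\|_{L^2}^2$, $\|n_h\theta_h^{3/2-\beta}\|_{L^2}^2$ from \emph{below} uniformly in $h$ — i.e.\ that the linear terms can be absorbed and that the convex functions $f_{b_1}=n^2\theta^{\beta-1/2}$ and $f_{b_2}=n^2\theta^5$ dominate all the required monomials in $(n,n\theta)$; once that lower bound is in hand, everything else is a sequence of pointwise power inequalities in $\theta$ plus Poincaré's inequality on $H^1_D(\Omega)$ (using $\mathrm{meas}(\Gamma_D)>0$) and the density of the Dirichlet data.
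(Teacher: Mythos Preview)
Your proposal is correct and follows essentially the same approach as the paper's proof: the lower bound on $S_{b_1,b_2}$ via Young's inequality (the paper's \eqref{ex.aux3}), the pointwise power inequalities in $\theta_h$ (the paper's \eqref{ex.th1}--\eqref{ex.th3}), the Poincar\'e inequality after subtracting the Dirichlet data, and the duality estimate for $D_h$. The only superfluous element is your early mention of the Stampacchia $L^\infty$ bounds from Step~2 --- these are $h$-dependent and play no role here, as you yourself recognize when you pivot to the $h$-uniform interpolation argument.
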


\begin{proof}
First, we observe that there exists a constant $C>0$, which depends only on 
$\beta\in(-\frac12,\frac12)$, such that
\begin{align}
  1+\theta_h^2+\theta_h^{1-2\beta}+\theta_n^{3-2\beta}
	&\le C(\theta_h^{\beta-1/2}+\theta_h^5), \label{ex.th1} \\
	1+\theta_h^2+\theta_h^{1-2\beta}+\theta_n^{3-2\beta}
	&\le C(1+\theta_h^{11/2-\beta}), \label{ex.th2} \\
	1+\theta_h^{-1-2\beta}+\theta_h^{1-2\beta}
	&\le C(\theta_h^{-2}+\theta_h^{7/2-\beta}). \label{ex.th3}
\end{align}
We claim that for $(b_1,b_2)=(\beta-\frac12,5)$,
\begin{equation}\label{ex.aux3}
  S_{b_1,b_2}[n_h,n_h\theta_h] \ge -C + C\int_\Omega 
	n_h^2(\theta_h^{\beta-1/2}+\theta_h^5)dx,
\end{equation}
where $C>0$ is a (generic) constant independent of $h$.
Indeed, it holds $f_{b_1}(n_h,n_h\theta_h)=n_h^2\theta_h^{\beta-1/2}$
and $f_{b_2}(n_h,n_h\theta_h)=n_h^2\theta_h^{5}$, and the terms involving
the boundary data can be estimated according to 
\begin{align*}
  \sum_{b=b_1,b_2}\int_\Omega\left|\frac{\pa f_{b,D}}{\pa n}(n_h-n_D)\right|dx
	&\le C_\delta + \frac{\delta}{2}\int_\Omega n_h^2 dx, \\
	\sum_{b=b_1,b_2}\int_\Omega\left|\frac{\pa f_{b,D}}{\pa(n\theta)}
	(n_h\theta_h-n_D\theta_D)\right|dx
	&\le C_\delta + \frac{\delta}{2}\int_\Omega n_h^2\theta_h^2 dx,
\end{align*}
where we employed the Young inequality with $\delta>0$. 
We infer from \eqref{ex.th1} that
$$
  \frac{\delta}{2}\int_\Omega n_h^2(1+\theta_h^2)dx 
	\le \frac{\delta}{2}C \int_\Omega n_h^2(\theta_h^{\beta-1/2}+\theta_h^5)dx,
$$
and these terms can be absorbed for sufficiently small $\delta>0$ by the
corresponding terms coming from $f_{b_1}$ and $f_{b_2}$. 
This proves \eqref{ex.aux3}.
Now, we multiply \eqref{ex.th1} by $n_h^2$, integrate over $\Omega$, and employ
\eqref{ex.aux3}:
\begin{align*}
  \|n_h(t)\|_{L^2(\Omega)}^2 
	&+ \|n_h(t)\theta_h(t)\|_{L^2(\Omega)}^2  
	+ \|n_h(t)\theta_h(t)^{1/2-\beta}\|_{L^2(\Omega)}^2 \\  
	&+ \|n_h(t)\theta_h(t)^{3/2-\beta}\|_{L^2(\Omega)}^2 
	\le C\big(1+S_{b_1,b_2}[n_h(t),n_h(t)\theta_h(t)]\big).
\end{align*}
Taking into account the entropy inequality \eqref{ex.ei2}, 
estimates \eqref{ex.L2.1}-\eqref{ex.L2.2} follow.

Next, we compute, using \eqref{ex.th2}-\eqref{ex.th3},
\begin{align*}
  |\na n_h|^2 &+ |\na(n_h\theta_h)|^2 + |\na(n_h\theta_h^{1/2-\beta})|^2
	+ |\na(n_h\theta_h^{3/2-\beta})|^2 \\
	&\le C(1+\theta_h^2+\theta_h^{1-2\beta}+\theta_h^{3-2\beta})|\na n_h|^2
	+ Cn_h^2(1+\theta_h^{-1-2\beta}+\theta_h^{1-2\beta})|\na\theta_h|^2 \\
	&\le C(1+\theta_h^{11/2-\beta})|\na n_h|^2 
	+ Cn_h^2(\theta_h^{-2}+\theta_h^{7/2-\beta})|\na\theta_h|^2.
\end{align*}
Hence, Young's inequality gives
\begin{align*}
  |\na(n_h-&n_D)|^2 + |\na(n_h\theta_h-n_D\theta_D)|^2
	+ |\na(n_h\theta_h^{1/2-\beta}-n_D\theta_D^{1/2-\beta})|^2 \\
	&\phantom{xxxx}{}+ |\na(n_h\theta_h^{3/2-\beta}-n_D\theta_D^{3/2-\beta})|^2 \\
	&\phantom{xx}\le C(1+\theta_h^{11/2-\beta})|\na n_h|^2 
	+ Cn_h^2(\theta_h^{-2}+\theta_h^{7/2-\beta})|\na\theta_h|^2 + C_D,
\end{align*}
where $C_D>0$ depends on the $L^2$ norms of $\na n_D$, $\na(n_D\theta_D)$,
$\na(n_D\theta_D^{1/2-\beta})$, and $\na(n_D\theta_D^{3/2-\beta})$.
Note that $C_D=0$ if $n_D$ and $\theta_D$ are constant in $\Omega$.
We integrate over $\Omega\times(0,T)$ and employ the Poincar\'e inequality to find that
\begin{align*}
  \int_0^T&\big(\|n_h-n_D\|_{H^1(\Omega)}^2 
	+ \|n_h\theta_h-n_D\theta_D\|_{H^1(\Omega)}^2
	+ \|n_h\theta_h^{1/2-\beta}-n_D\theta_D^{1/2-\beta}\|_{H^1(\Omega)}^2 \\
	&\phantom{xx}{}
	+ \|n_h\theta_h^{3/2-\beta}-n_D\theta_D^{3/2-\beta}\|_{H^1(\Omega)}^2\big)dt \\
	&\le C\int_0^T\int_\Omega\big((1+\theta_h^{11/2-\beta})|\na n_h|^2
	+ n_h^2(\theta_h^{-2}+\theta_j^{7/2-\beta})|\na\theta_h|^2\big)dx\,dt + TC_D \\
	&\le C,
\end{align*}
because of the entropy inequality \eqref{ex.ei2}. This shows
\eqref{ex.H1.1}-\eqref{ex.H1.2}. 

Finally, estimate \eqref{ex.H-1.1} follows from
\begin{align*}
  \|D_h n_h\|_{L^2(0,T;H^1_D(\Omega)')}
	&\le \|\na(n_h\theta_h^{1/2-\beta})\|_{L^2(0,T;L^2(\Omega))} \le C, \\
	\|D_h(n_h\theta_h)\|_{L^2(0,T;H^1_D(\Omega)')}
	&\le \kappa\|\na(n_h\theta_h^{3/2-\beta})\|_{L^2(0,T;L^2(\Omega))} \\
	&\phantom{xx}{}+ C\tau^{-1}\|n_h-n_h\theta_h\|_{L^2(0,T;L^2(\Omega))} \le C,
\end{align*}
using \eqref{ex.L2.1} and \eqref{ex.H1.2}. 
\end{proof}

By weak compactness and the Aubin lemma in the version of \cite[Theorem 1]{DrJu12},
the uniform bounds of Lemma \ref{lem.est} imply the existence of subsequences
of $(n_h)$ and $(n_h\theta_h)$, which are not relabeled, such that as $h\to 0$,
\begin{align*}
  n_h\to n,\ n_h\theta_h \to w,\ n_h\theta_h^{1/2-\beta} \to y,\
	n_h\theta_h^{3/2-\beta} \to z &\quad\mbox{strongly in }L^2(0,T;L^2(\Omega)), \\
	n_h\rightharpoonup n,\ n_h\theta_h \rightharpoonup w,\ 
	n_h\theta_h^{1/2-\beta} \rightharpoonup y,\ 	
	n_h\theta_h^{3/2-\beta} \rightharpoonup z &\quad\mbox{weakly in }
	L^2(0,T;H^1(\Omega)), \\
	D_h n_h \rightharpoonup \pa_t n,\ D_h(n_h\theta_h)\rightharpoonup \pa_t w
	&\quad\mbox{weakly in }L^2(0,T;H_D^1(\Omega)').
\end{align*}
We wish to identify the limit functions $w$, $y$, and $z$. To this end,
we observe that (subsequences of) $n_h(t)$ and 
$n_h(t)\theta_h^{3/2-\beta}(t)$ converge
pointwise a.e.\ in $\Omega$ for a.e.\ $t\in(0,T)$. 
Hence, by Fatou's lemma and the Cauchy-Schwarz inequality,
\begin{align*}
  \int_\Omega & \frac{z(t)^{5/(3-2\beta)}}{n(t)^{(2+2\beta)/(3-2\beta)}}dx
	= \int_\Omega\liminf_{h\to 0}
	\frac{(n_h(t)\theta_h^{3/2-\beta})^{5/(3-2\beta)}}{
	n_h(t)^{(2+2\beta)/(3-2\beta)}}dx \\
	&\phantom{xx}\le \liminf_{h\to 0}\int_\Omega n_h(t)\theta_h(t)^{5/2}dx
	\le \mbox{meas}(\Omega)^{1/2}\liminf_{h\to 0}
	\left(\int_\Omega n_h(t)^2\theta_h(t)^5 dx\right)^{1/2} < \infty,
\end{align*}
since \eqref{ex.ei2} and \eqref{ex.aux3} show that the integral
of $n_h(t)^2\theta_h(t)^5$ is bounded uniformly in $h$ and $t$.
We infer that $z=0$ a.e.\ in $\{n=0\}$. We define $\theta=(z/n)^{2/(3-2\beta)}$
for $n>0$ and $\theta=0$ for $n=0$. Then $z=n\theta^{3/2-\beta}$. We pass
to the pointwise a.e.\ limit $h\to 0$ in
\begin{align*}
  n_h\theta_h^{1/2-\beta} &= n_h^{2/(3-2\beta)}
	\big(n_h\theta_h^{3/2-\beta}\big)^{(1-2\beta)/(3-2\beta)}, \\
  n_h\theta_h &= n_h^{(1-2\beta)/(3-2\beta)}
	\big(n_h\theta_h^{3/2-\beta}\big)^{2/(3-2\beta)},
\end{align*}
to find that, thanks to the pointwise a.e.\ convergence of the sequences
$(n_h)$, $(n_h\theta_h)$, $(n_h\theta^{1/2-\beta})$, and $(n_h\theta^{3/2-\beta})$,
\begin{align*}
  y &= n^{2/(3-2\beta)}\big(n\theta^{3/2-\beta}\big)^{(1-2\beta)/(3-2\beta)}
	= n\theta^{1/2-\beta}, \\
	w &= n^{(1-2\beta)/(3-2\beta)}
	\big(n\theta^{3/2-\beta}\big)^{2/(3-2\beta)} = n\theta.
\end{align*}
Now, we are in the position to perform the limit $h\to 0$ in \eqref{ex.nh}
which finishes the proof of Theorem \ref{thm.ex}.

{\em Step 6: Temperature-dependent relaxation times.} 
It remains to prove Corollary \ref{coro.ex}.
The proof is exactly as in Steps 1-5 except at two points. First,
we need to ensure in Step 2 that $\tau(\theta)$ is bounded from below
to obtain
$$
  \left(1-\frac{1}{\kappa}\left(1+\frac{h}{\tau(\theta_j)}\right)\right)
	\ge \left(1-\frac{1}{\kappa}\left(1+\frac{h}{\tau_0}\right)\right) > 0
$$
for all $0<h<(1-\kappa)\tau_0$,
which is needed to estimate \eqref{ex.aux}. Second, we need to pass to the limit
$h\to 0$ in the relaxation time term in Step 5. This is more involved since
we cannot perform the limit in $\tau(\theta_h)$. The idea is to expand the
fraction $n_h/\tau(\theta_h)$ and to consider
$$
  \frac{n_h}{\tau(\theta_h)}(1-\theta_h)
	= \frac{n_h^2(1-\theta_h)}{\tau_0 n_h+\tau_1 n_h\theta_h^{1/2-\beta}}.
$$
The pointwise convergences of $n_h\to n$ and 
$n_h\theta_h^{3/2-\beta}\to n\theta^{3/2-\beta}$ imply that
$$
  n_h\theta_h^{1/2} = n_h^{2(1-\beta)/(3-2\beta)}\big(n_h\theta_h^{3/2-\beta}
	\big)^{1/(3-2\beta)}
$$
converges pointwise to $n\theta^{1/2}$ as $h\to 0$. Consequently, we have
the pointwise convergence
$$
  \frac{n_h}{\tau(\theta_h)}(1-\theta_h)
	= \frac{n_h(n_h-n_h\theta_h)}{\tau_0 n_h+\tau_1 n_h\theta_h^{1/2-\beta}}
	\to \frac{n^2(1-\theta)}{\tau_0 n+\tau_1 n\theta^{1/2-\beta}}
	= \frac{n}{\tau(\theta)}(1-\theta).
$$
Furthermore, by \eqref{ex.L2.1},
$$
  \sup_{(0,T)}\int_\Omega \frac{n_h^2}{\tau(\theta_h)^2}(1-\theta_h)^2 dx
	\le \frac{2}{\tau_0^2}\sup_{(0,T)}\int_\Omega n_h^2(1+\theta_h^2) dx
$$
is uniformly bounded such that, together with the above pointwise convergence
and up to a subsequence, it follows that
$$
  \frac{n_h}{\tau(\theta_h)}(1-\theta_h)
  \rightharpoonup \frac{n}{\tau(\theta)}(1-\theta)\quad
	\mbox{weakly in }L^2(0,T;L^2(\Omega)).
$$
This ends the proof.


\section{Long-time behavior of solutions}\label{sec.long}

We prove Theorem \ref{thm.long}. The proof is divided into several steps.

{\em Step 1:} Let $(n_j,n_j\theta_j)$ be a solution
to \eqref{ex.nh} with boundary conditions \eqref{1.bc}. We recall that
both $n_j$ and $\theta_j$ are strictly positive. 
Observing that the constant 
boundary data gives $C_2=0$ in \eqref{ex.ei}, 
we obtain for $(b_1,b_2)\in N_\beta^*$,
\begin{align*}
  S_{b_1,b_2}[n_j,n_j\theta_j]
	&+ C_1 h\int_\Omega\big((\theta_j^{b_1+1/2-\beta}
	+\theta_j^{b_2+1/2-\beta})|\na n_j|^2 \\
	&+ n_j^2(\theta_j^{b_1-3/2-\beta}+\theta_j^{b_2-3/2-\beta})|\na\theta_j|^2
	\big)dx \le S_{b_1,b_2}[n_{j-1},n_{j-1}\theta_{j-1}].
\end{align*}
In particular, for $(b_1,b_2)=(\beta-1/2,5/2-\beta)\in N_\beta^*$,
\begin{align}
  S_{b_1,b_2}[n_j,n_j\theta_j]
	&+ C_1 h\int_\Omega\big(1+\theta_j^{3-2\beta})|\na n_j|^2 
	+ n_j^2(\theta_j^{-2}+\theta_j^{1-2\beta})|\na\theta_j|^2\big)dx \nonumber \\
	&\le S_{b_1,b_2}[n_{j-1},n_{j-1}\theta_{j-1}], \label{lo.beta-12}
\end{align}
and for $(b_1,b_2)=(-3,5)\in N_\beta^*$ (here, we need $\beta\ge 0$),
\begin{align}
  S_{b_1,b_2}[n_j,n_j\theta_j]
	&+ C_1 h\int_\Omega\big((\theta_j^{-5/2-\beta}+\theta_j^{11/2-\beta})
	|\na n_j|^2 \nonumber \\
	&+ n_j^2(\theta_j^{-9/2-\beta}+\theta_j^{7/2-\beta})|\na\theta_j|^2
	\big)dx \le S_{b_1,b_2}[n_{j-1},n_{j-1}\theta_{j-1}]. \label{lo.-3}
\end{align}

{\em Step 2:}
We show that the integral involving the gradient terms in \eqref{lo.beta-12}
can be bounded from below by, up to a factor, the entropy $S_{b_1,b_2}$. 
To this end, we observe that, by the convexity of 
$f_b(n,n\theta)=n^{2-b}(n\theta)^b$ for $b\le 0$ or $b\ge 2$,
\begin{align*}
  \int_\Omega\big( & f_b(n_j,n_j\theta_j)-f_b(n_D,n_D)\big)dx \\
	&\le \int_\Omega\left(\frac{\pa f_b}{\pa n}(n_j,n_j\theta_j)(n_j-n_D)
	+ \frac{\pa f_b}{\pa(n\theta)}(n_j,n_j\theta_j)(n_j\theta_j-n_D)\right)dx.
\end{align*}
This implies that
\begin{align*}
  \phi_b[n_j,n_j\theta_j] 
	&\le \int_\Omega\bigg(\left(\frac{\pa f_{b,j}}{\pa n}-\frac{\pa f_{j,D}}{\pa n}
	\right)(n_j-n_D) \\
	&\phantom{xx}{}+ \left(\frac{\pa f_{b,j}}{\pa(n\theta)}
	-\frac{\pa f_{j,D}}{\pa(n\theta)}\right)(n_j\theta_j-n_D)\bigg)dx \\
	&= \int_\Omega\big((2-b)(n_j\theta_j^b-n_D)(n_j-n_D)
	+ b(n_j\theta_j^{b-1}-n_D)(n_j\theta_j-n_D)\big)dx \\
	&\le C\big(1+\|n_j\theta_j^b\|_{L^2(\Omega)}\big)\|n_j-n_D\|_{L^2(\Omega)} \\
	&\phantom{xx}{}+ C\big(1+\|n_j\theta_j^{b-1}\|_{L^2(\Omega)}\big)
	\|n_j\theta_j-n_D\|_{L^2(\Omega)}.
\end{align*}
Hence, we obtain for $(b_1,b_2)=(\beta-1/2,5/2-\beta)$,
\begin{align}
  S_{b_1,b_2}&[n_j,n_j\theta_j]
	\le C\phi_{b_1}[n_j,n_j\theta_j]
	+ C\phi_{b_2}[n_j,n_j\theta_j] \nonumber \\
	&\le C\big(1+\|n_j\theta_j^{\beta-1/2}\|_{L^2(\Omega)}
	+\|n_j\theta_j^{5/2-\beta}\|_{L^2(\Omega)}\big)\|n_j-n_D\|_{L^2(\Omega)} 
	\nonumber \\
	&\phantom{xx}{}+ C\big(1+\|n_j\theta_j^{\beta-3/2}\|_{L^2(\Omega)}
	+\|n_j\theta_j^{3/2-\beta}\|_{L^2(\Omega)}\big)
	\|n_j\theta_j-n_D\|_{L^2(\Omega)}. \label{lo.aux3}
\end{align}
Noting that (again using $\beta\ge 0$)
$$
  n_j^2\theta_j^{2\beta-1} + n_j^2\theta_j^{2\beta-3} + n_j^2\theta_j^{3-2\beta}
	+ n_j^2\theta_j^{5-2\beta}
	\le Cn_j^2(\theta_j^{-3}+\theta_j^5)
$$
for some generic constant $C>0$ not depending on $h$,
we infer from \eqref{lo.-3}, after summation over $j$, that
$$
  \|n_j\theta_j^{\beta-1/2}\|_{L^2(\Omega)}
	+ \|n_j\theta_j^{5/2-\beta}\|_{L^2(\Omega)} \le C, \quad
	\|n_j\theta_j^{\beta-3/2}\|_{L^2(\Omega)}
	+ \|n_j\theta_j^{3/2-\beta}\|_{L^2(\Omega)} \le C,
$$
and $C>0$ does not depend on $j$ or $h$.
Thus, \eqref{lo.aux3} becomes, with $(b_1,b_2)=(\beta-1/2,5/2-\beta)$,
$$
  S_{b_1,b_2}[n_j,n_j\theta_j]
	\le C\|n_j-n_D\|_{L^2(\Omega)} + C\|n_j\theta_j-n_D\|_{L^2(\Omega)}.
$$
Taking the square and employing the Poincar\'e inequality yields
\begin{align*}
  S_{b_1,b_2}&[n_j,n_j\theta_j]^2 
	\le C\int_\Omega\big(|n_j-n_D|^2 + |n_j\theta_j-n_D|^2\big)dx \\
	&\le C\int_\Omega\big(|\na n_j|^2 + |\na(n_j\theta_j)|^2\big)dx 
	\le C\int_\Omega\big((1+\theta_j^2)|\na n_j|^2 + n_j^2|\na\theta_j|^2
	\big)dx \\
	&\le C\int_\Omega\big((1+\theta_h^{3-2\beta})|\na n_h|^2
	+ n_h^2(\theta_h^{-2}+\theta_h^{1-2\beta})|\na\theta_h|^2\big)dx.
\end{align*}
The last inequality follows from elementary estimations using the fact that
$\beta<1/2$. This is the desired estimate. 

Thus, it follows from \eqref{lo.beta-12} that
$$
  S_{b_1,b_2}[n_j,n_j\theta_j]
	+ Ch S_{b_1,b_2}[n_j,n_j\theta_j]^2 
	\le S_{b_1,b_2}[n_{j-1},n_{j-1}\theta_{j-1}],
$$
where still $(b_1,b_2)=(\beta-1/2,5/2-\beta)$.
We employ the following lemma which is a consequence of Lemma 17 in \cite{CJS13}.

\begin{lemma}
Let $(x_j)$ be a sequence of nonnegative numbers such that
$x_j+\kappa x_j^2 \le x_{j-1}$ for $j\in\N$. Then 
$$
  x_j \le \frac{x_0}{1+\kappa x_0 j/(1+2\kappa x_0)}, \quad j\in\N.
$$
\end{lemma}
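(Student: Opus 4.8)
The plan is to prove the equivalent reciprocal bound
\[
  \frac{1}{x_j} \;\ge\; \frac{1}{x_0} + \frac{\kappa j}{1+2\kappa x_0},
\]
from which the stated estimate follows by inverting. First I would clear away the degenerate cases. Since $x_j \le x_j+\kappa x_j^2 \le x_{j-1}$, the sequence is nonincreasing; so if $x_0=0$ then all $x_j=0$ and both sides above are nonnegative, while if $x_j=0$ for some index the asserted bound is trivial there. Hence one may assume $x_0>0$ and, when proving the bound for a fixed $j$, that $x_j>0$, which by monotonicity forces $0<x_j\le x_{j-1}\le\dots\le x_0$, so that every reciprocal occurring below is well defined.

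The heart of the matter is a uniform lower bound on the telescoping increment. From $x_{i-1}-x_i\ge\kappa x_i^2$ one obtains directly
\[
  \frac{1}{x_i}-\frac{1}{x_{i-1}} \;=\; \frac{x_{i-1}-x_i}{x_i x_{i-1}} \;\ge\; \frac{\kappa x_i}{x_{i-1}},
\]
and I claim this is $\ge \kappa/(1+2\kappa x_0)$ for every $i\le j$. This requires a case split. If $x_{i-1}\le(1+2\kappa x_0)x_i$, the bound is immediate from the line above. If instead $x_{i-1}>(1+2\kappa x_0)x_i$, then $1/x_{i-1}<\bigl(1/x_i\bigr)/(1+2\kappa x_0)$, so
\[
  \frac{1}{x_i}-\frac{1}{x_{i-1}} \;>\; \frac{1}{x_i}\cdot\frac{2\kappa x_0}{1+2\kappa x_0} \;\ge\; \frac{1}{x_0}\cdot\frac{2\kappa x_0}{1+2\kappa x_0} \;=\; \frac{2\kappa}{1+2\kappa x_0},
\]
using $x_i\le x_0$, which is more than enough. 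Summing this increment inequality over $i=1,\dots,j$ (all $x_i$ being strictly positive there) gives the reciprocal bound, and inverting it yields the lemma.

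The argument is elementary and I foresee no real obstacle; the only two places requiring a little care are the bookkeeping that confines the telescoping sum to a run of strictly positive terms, and keeping the directions of the inequalities straight in the case split. It is worth noting that the correction factor $1/(1+2\kappa x_0)$, which is what distinguishes the bound from the naive continuous-time comparison $x_0/(1+\kappa x_0 t)$, is produced precisely by the second case, so this method is not expected to remove it.
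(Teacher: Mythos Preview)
Your argument is correct. The degenerate cases are handled cleanly, the monotonicity $x_i\le x_0$ is established, and the case split yielding the uniform increment bound $\frac{1}{x_i}-\frac{1}{x_{i-1}}\ge\frac{\kappa}{1+2\kappa x_0}$ is sound in both branches; the telescoping sum then gives exactly the reciprocal inequality you need.

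As for comparison with the paper: there is essentially nothing to compare, since the paper does not prove the lemma at all but simply invokes it as ``a consequence of Lemma~17 in \cite{CJS13}''. Your self-contained elementary proof therefore supplies strictly more than the paper does at this point. The reciprocal-telescoping idea you use is in fact the standard way such discrete nonlinear Gronwall estimates are obtained, and your observation that the second case of the split is what produces the correction factor $1/(1+2\kappa x_0)$ is a nice remark that clarifies why the discrete bound differs from the continuous one $x_0/(1+\kappa x_0 t)$.
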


Hence, with $S_0= S_{b_1,b_2}[n_0,n_0\theta_0]$,
$$
  S_{b_1,b_2}[n_j,n_j\theta_j]
	\le \frac{S_0}{1 + Chj S_0/(1+2Ch S_0)},
$$
which can be written as
\begin{equation}\label{lo.ineq}
  S_{b_1,b_2}[n_h(t),n_h(t)\theta_h(t)]
	\le \frac{S_0}{1 + Ct S_0/(1+2Ch S_0)}, \quad t>0.
\end{equation}

{\em Step 3:} It remains to prove a lower bound for $ S_{b_1,b_2}$.
We employ again the convexity of $f_b$:
\begin{equation}\label{lo.aux}
  S_{b_1,b_2}[n_h,n_h\theta_h]
	\ge C\int_\Omega\lambda\big(|n_h-n_D|^2 + |n_h\theta_h-n_D|^2\big)dx,
\end{equation}
where $\lambda$ is the minimal eigenvalue of the Hessian
$D^2 f_{b_1}(\xi_1,\xi_2)+D^2 f_{b_2}(\xi_1,\xi_2)$ and
$\xi_1 = \alpha n_h+(1-\alpha)n_D$, $\xi_2=\alpha n_h\theta_h+(1-\alpha)n_D$
for some $0\le\alpha\le 1$. 

We recall the following results from linear algebra. 
If $A$ and $B$ are two symmetric
matrices in $\R^{2\times 2}$ with minimal eigenvalues $\lambda_{\rm min}(A)$ and 
$\lambda_{\rm min}(B)$, respectively, then 
$\lambda_{\rm min}(A+B)\ge \lambda_{\rm min}(A)+\lambda_{\rm min}(B)$
(since the minimal eigenvalue is the minimum of the Rayleigh quotient).
Furthermore, a simple computation shows that
$\lambda_{\rm min}(A) = \frac12\mbox{tr}(A)-(\frac14\mbox{tr}(A)^2
-\det(A))^{1/2}\ge \det(A)/\mbox{tr}(A)$.
Consequently, since
\begin{align*}
  \det(D^2 f_b(\xi_1,\xi_2)) &= b(b-2)\eta^{2b-2}, \\
	\mbox{tr}(D^2 f_b(\xi_1,\xi_2)) &= (b-1)((b-2)\eta^b + b\eta^{b-2}),
\end{align*}
with $\eta=\xi_2/\xi_1$, we conclude that
\begin{align*}
  \lambda &\ge \lambda_{\rm min}(D^2 f_{b_1}(\xi_1,\xi_2))
	+ \lambda_{\rm min}(D^2 f_{b_2}(\xi_1,\xi_2)) \\
	&\ge \frac{\det(D^2 f_{b_1}(\xi_1,\xi_2))}{\mbox{tr}
	(D^2 f_{b_1}(\xi_1,\xi_2))}
	+ \frac{\det(D^2 f_{b_2}(\xi_1,\xi_2))}{\mbox{tr}
	(D^2 f_{b_2}(\xi_1,\xi_2))}
	\ge C\frac{\eta^{\beta-1/2}+\eta^{5/2-\beta}}{1+\eta^2}.
\end{align*}
Since $\beta<1/2$, the function $x\mapsto (x^{\beta-1/2}+x^{5/2-\beta})/(1+x^2)$
has a positive lower bound. Therefore, $\lambda$ is strictly positive
independent of $h$. Going back to \eqref{lo.aux}, we infer the lower bound
$$
  S_{b_1,b_2}[n_h,n_h\theta_h] 
	\ge C\int_\Omega\big(|n_h-n_D|^2 + |n_h\theta_h-n_D|^2\big)dx.
$$
Together with \eqref{lo.ineq}, this shows that
$$
  \|n_h(t)-n_D\|_{L^2(\Omega)}^2 + \|n_h(t)\theta_h(t)-n_D\|_{L^2(\Omega)}^2
	\le \frac{S_0}{1+C(S_0)t}, \quad t>0.
$$
In view of Lemma \ref{lem.est}, the sequences $(n_h)$ and $(n_h\theta_h)$
are bounded in $L^\infty(0,T;L^2(\Omega))$. Therefore, by Fatou's lemma,
we obtain
$$
  \|n(t)-n_D\|_{L^2(\Omega)}^2 + \|n(t)\theta(t)-n_D\|_{L^2(\Omega)}^2
	\le \frac{S_0}{1+C(S_0)t}, \quad t>0,
$$
which concludes the proof.


\section{Numerical experiments}\label{sec.num}

In this section we present some numerical results related to \eqref{1.eq}.
According to Theorem \ref{thm.long}, the solution $(n(t),n(t)\theta(t))$
converges to $(n_D,n_D\theta_D)$ in $L^2(\Omega)$ as $t\to\infty$ if
$n_D$, $\theta_D$ are constants and $\theta_D=1$. We want to check this 
behavior in the numerical simulations if the particle density and temperature are 
close to zero in some point initially.
 
We consider system \eqref{1.eq} in one space dimension with
$\Omega = (0,1)\subset\R$, and we impose Dirichlet boundary conditions 
at $x=0,1$ and initial conditions \eqref{1.ic}. We choose 
the boundary data $n_D=\theta_D=1$ and the initial
functions $n_0(x)=\exp(-48x^2)$ for $0\le x\le \frac12$,
$n_0(x)=\exp(-48(x-1)^2)$ for $\frac12<x\le 1$, and $\theta_0=n_0$.
Both initial functions are very small at $x=\frac12$; it holds
$n_0(\frac12)=\theta_0(\frac12)=\exp(-12)\approx 6.1\cdot 10^{-6}$.

The equations are discretized
in time by the implicit Euler method with time step $\triangle t$ and in 
space by central finite differences with space step $\triangle x$. 
The discretized nonlinear system is solved by the Newton method. 
The time step is chosen in an adaptive way: 
It is multiplied by the factor 1.25 when the initial guess in the Newton iterations 
satisfies already the tolerance imposed on the residual, 
and it is multiplied by the factor 0.75 
when the solution of the Newton system is not feasible (namely, not positive).
The space step is chosen as $\triangle x=2\cdot 10^{-3}$ (501 grid points)
and the maximal time step is $\triangle t=2\cdot 10^{-3}$.

Figures \ref{fig.betaminus025} and \ref{fig.betaplus025} illustrate the 
temporal behavior of the partical density $n$ and the temperature $\theta$
for $\beta=-0.25$ and $\beta=0.25$, respectively,
at various small times. 
For larger times, the functions approach the constant steady state. 
The diffusion causes the singularity at $x=\frac12$ to smooth out quickly, and
the solution converges to the steady state. In Figure
\ref{fig.longtime}, the decay of the relative $\ell^2$ difference to the
steady state is illustrated in a semi-logarithmic plot. Even for $\beta<-\frac12$
or $\beta>\frac12$, the decay to equilibrium seems to be exponentially fast,
at least after an initial phase. 
This may indicate that the decay rate of Theorem \ref{thm.long}
is not optimal. Moreover, the results indicate that there may exist solutions
to \eqref{1.eq}-\eqref{1.ic} even for $\beta<-\frac12$ and $\beta>\frac12$.

\begin{figure}[ht]
\includegraphics[width=80mm,height=60mm]{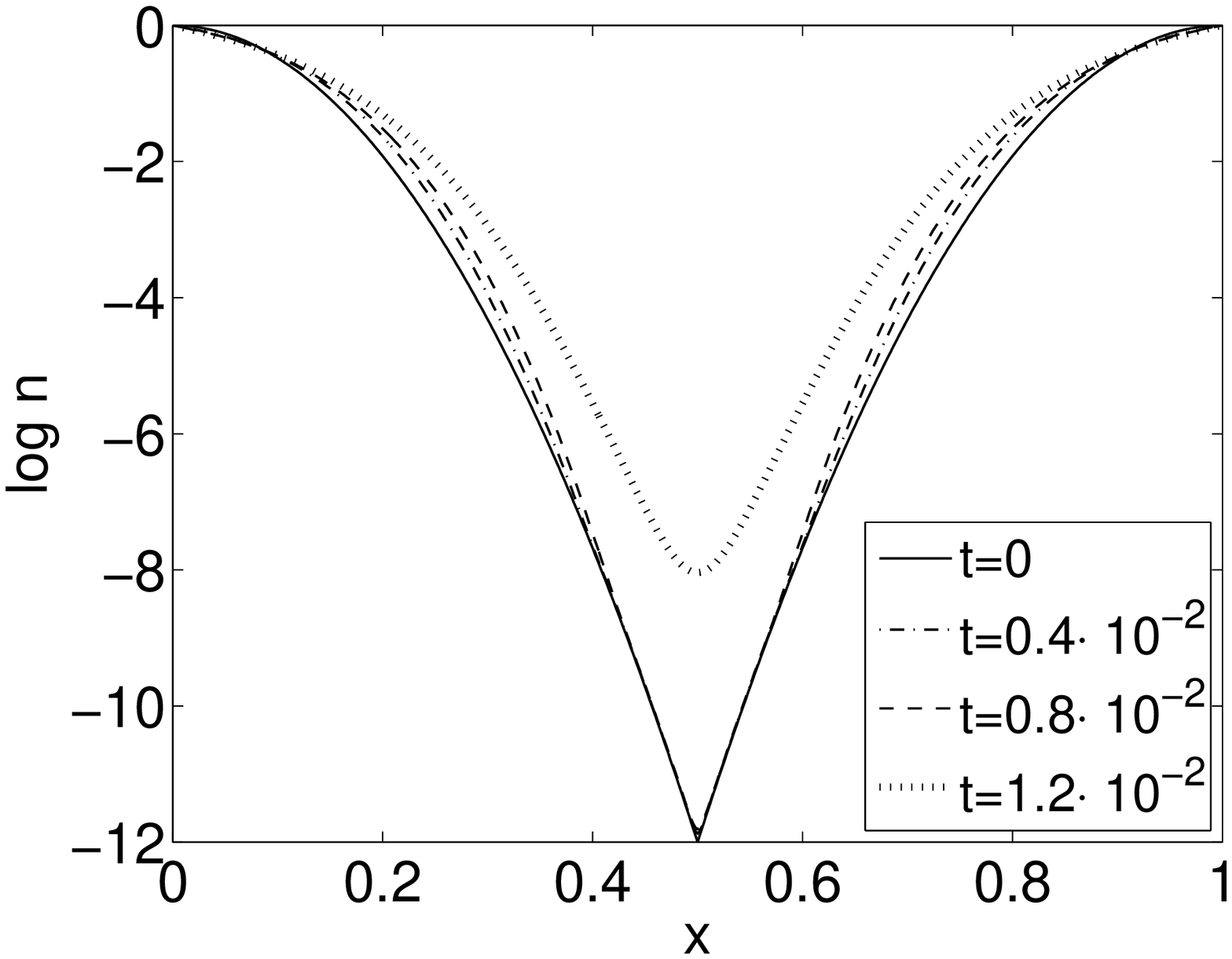}
\includegraphics[width=80mm,height=60mm]{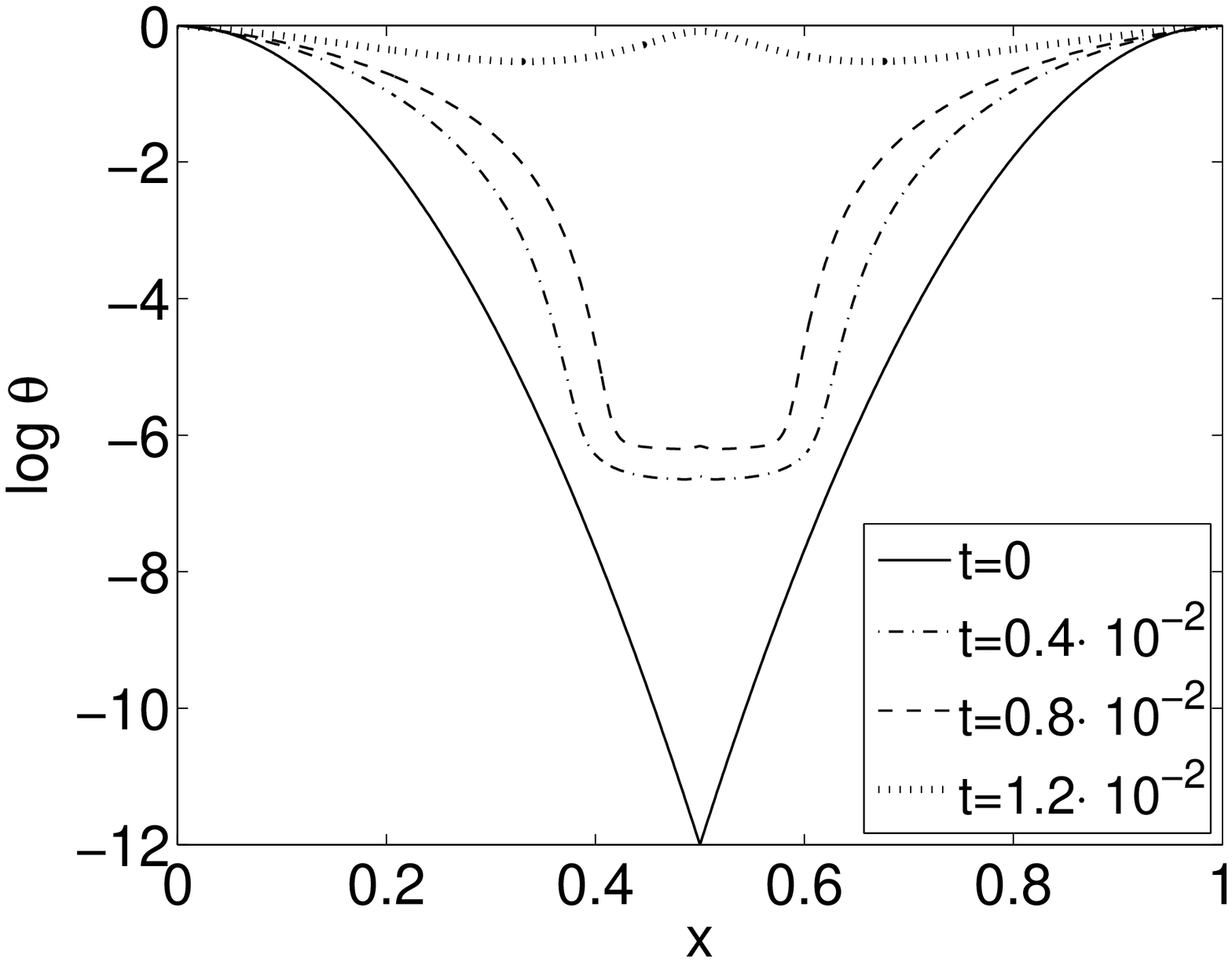}
\caption{Evolution of the particle density $n$ and the temperature $\theta$
(semi-logarithmic plot) at various times for $\beta=-0.25$.}
\label{fig.betaminus025}
\end{figure}

\begin{figure}[ht]
\includegraphics[width=80mm,height=60mm]{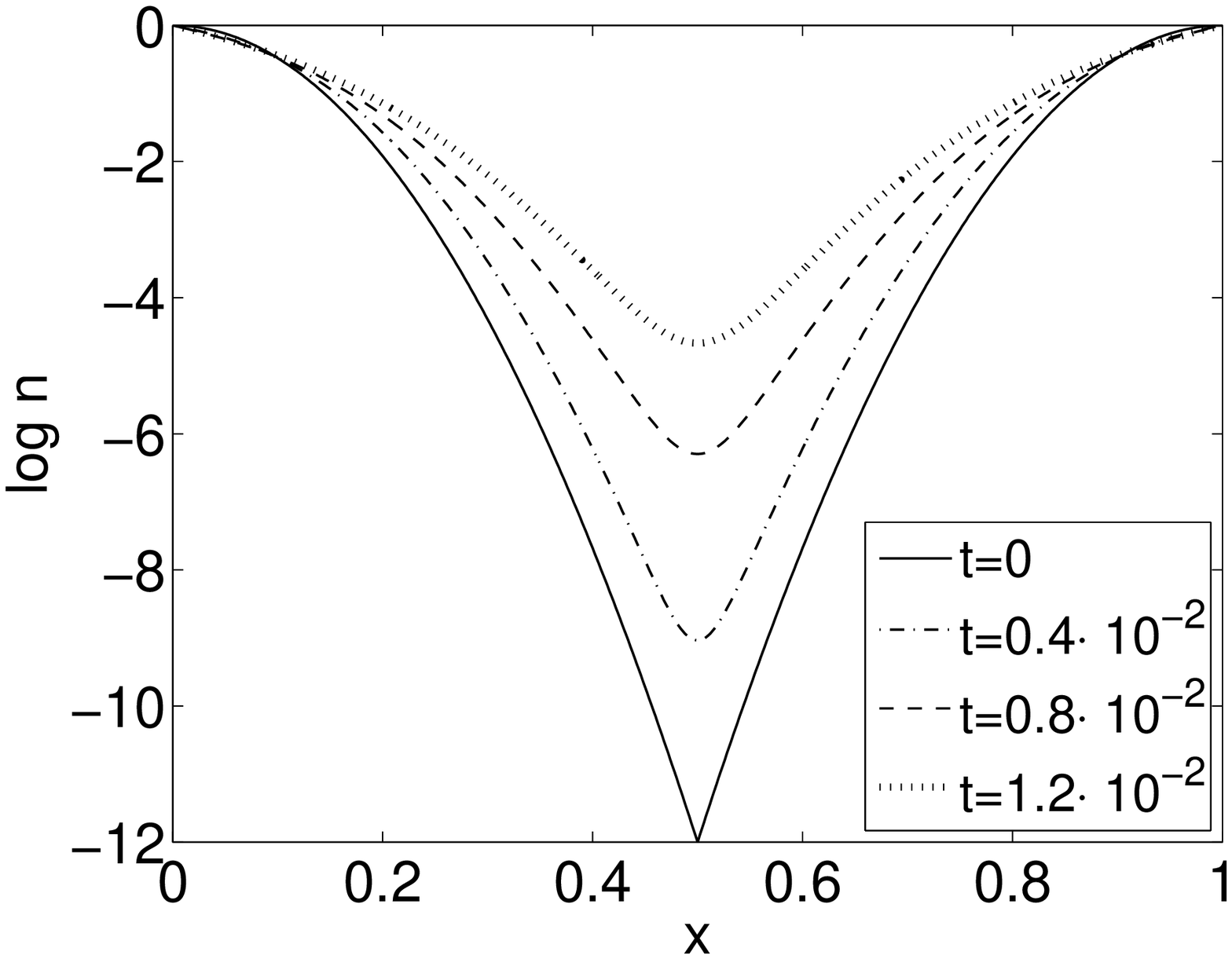}
\includegraphics[width=80mm,height=60mm]{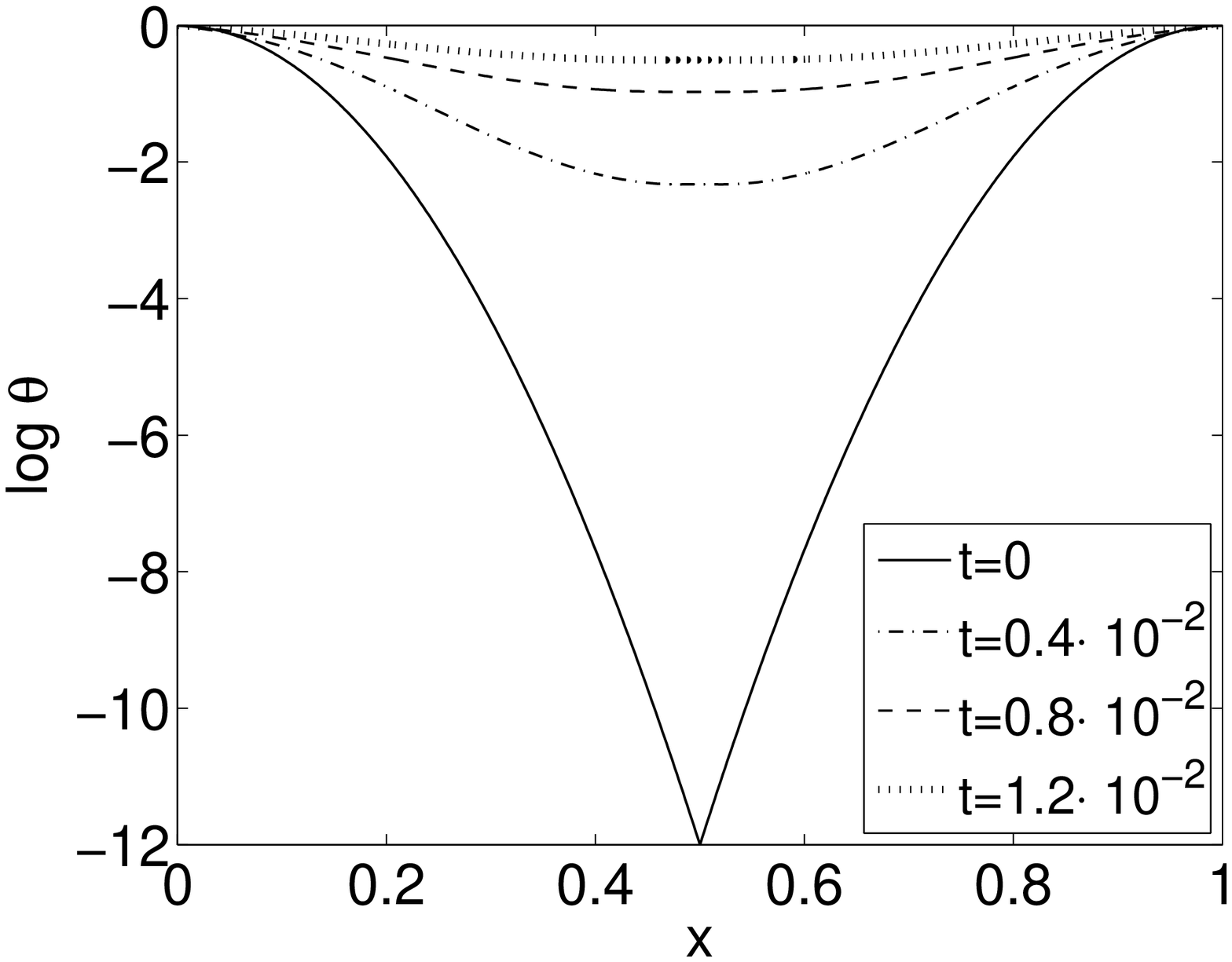}
\caption{Evolution of the particle density $n$ and the temperature $\theta$
(semi-logarithmic plot) at various times for $\beta=0.25$.}
\label{fig.betaplus025}
\end{figure}

\begin{figure}[ht]
\includegraphics[width=80mm,height=60mm]{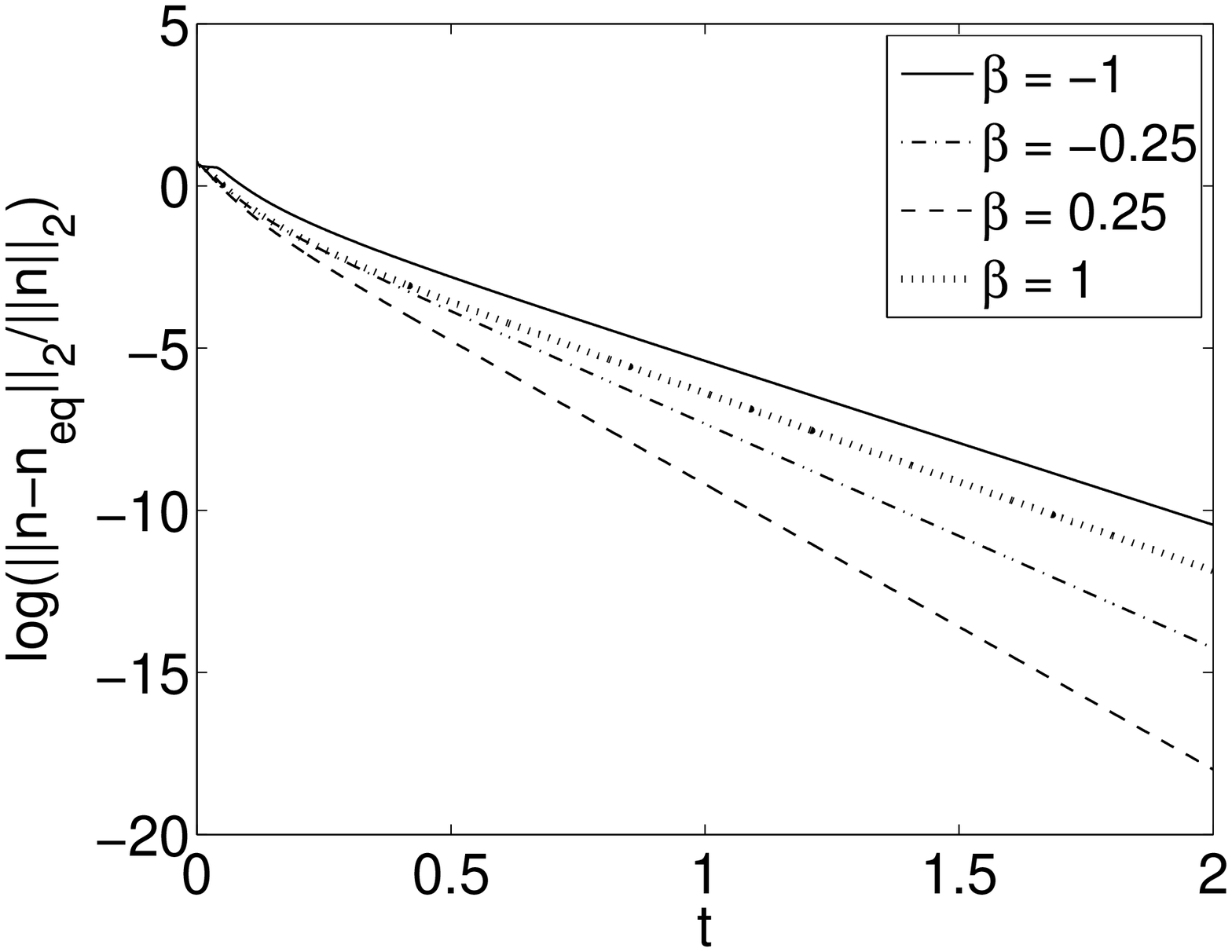}
\includegraphics[width=80mm,height=60mm]{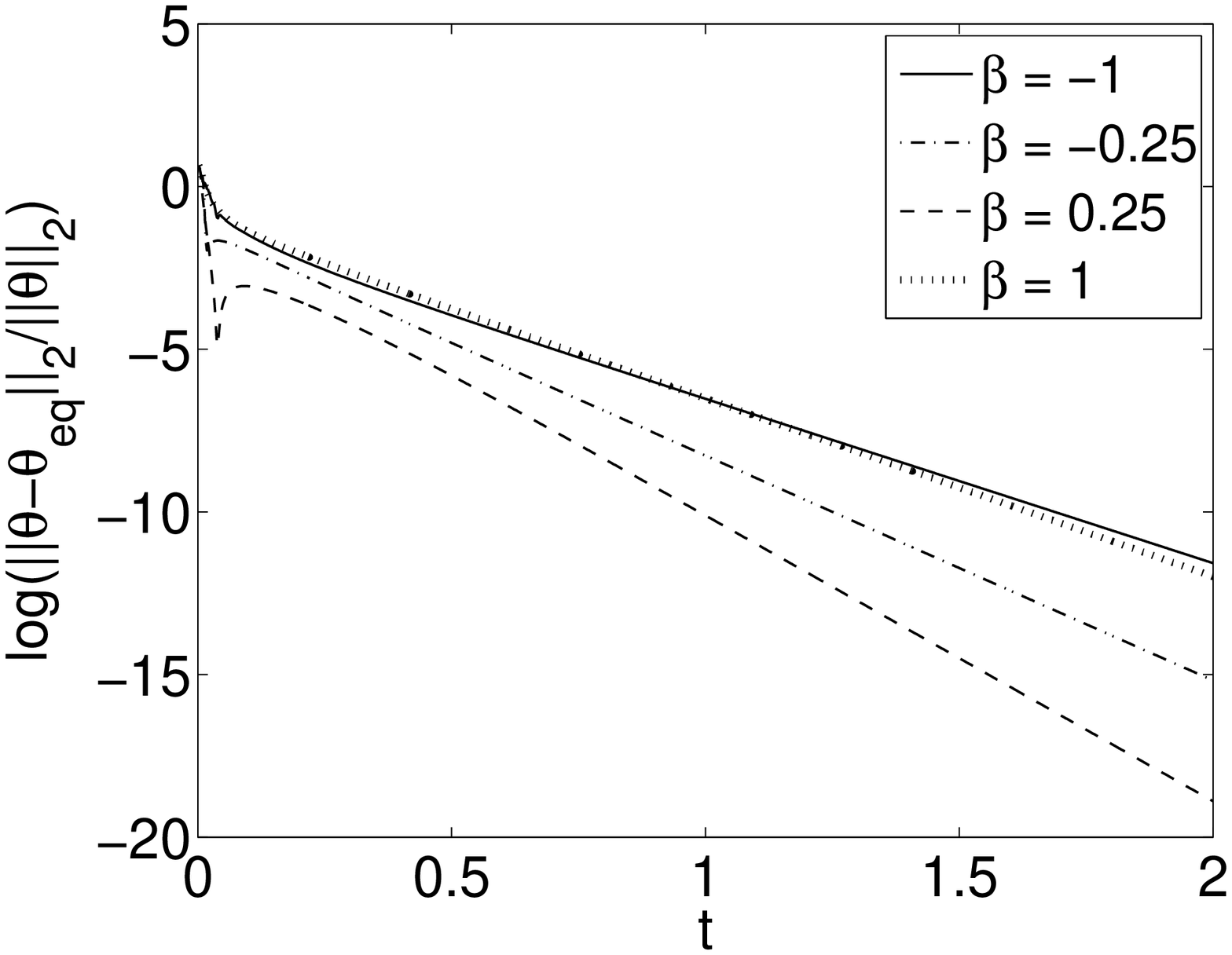}
\caption{Decay of the relative $\ell^2$ distance to the equilibrium functions
$n_{\rm eq}=1$ and $\theta_{\rm eq}=1$ for various values of $\beta$ (semi-logarithmic
plot).}
\label{fig.longtime}
\end{figure}


\end{document}